\newtheorem{theo}{Theorem}
\newtheorem{lemma}[theo]{Lemma}
\newtheorem{pr}[theo]{Proposition}
\newtheorem{df}{Definition}
\newtheorem{remark}{Remark}
\newtheorem{ex}[theo]{Example}
\newtheorem{assumption}[theo]{Assumption}
\def\a{a}
\def\ud{u^\delta}
\def\argmin{\text{argmin}}
\def\F{\mathcal{F}}
\newcommand\N{\mathbb{N}}
\def\a{a}
\def\ud{u^\delta}
\def\argmin{\text{argmin}}
\def\X{H^{1+\varepsilon}(D)}
\def\Y{L^2(D)}
\newcommand\norm[2][]{{\left\lVert#2\right\rVert_{#1}}}
\newcommand\domain[1]{{\cal D}({#1})}
\newcommand\KL[2]{{\text{KL}}({#1},{#2})}
\title{\bf Convex Regularization of Local Volatility Estimation in a Discrete Setting}
\author{V. Albani\thanks{IMPA, Estr. D. Castorina
         110, 22460-320 Rio de Janeiro, Brazil, \href{mailto:vvla@impa.br}{\tt
         vvla@impa.br}}, \,
        A. De Cezaro\thanks{ IMEF/FURG,
         Av. Italia Km 8, 96201-900 Rio  Grande, Brasil,\, \href{mailto:decezaro@impa.br}
        {\tt decezaro@impa.br}}\,
                  and \
        J. P.~Zubelli\thanks{IMPA, Estr. D. Castorina 110,
         22460-320 Rio de Janeiro, Brasil,\, \href{mailto:zubelli@impa.br}
        {\tt zubelli@impa.br}}
        }
\begin{document}
\maketitle

\newcommand{\slugmaster}{%
\slugger{}{2013 Albani, De Cezaro, Zubelli}{}{}{}}
% \renewcommand{\thefootnote}{\fnsymbol{footnote}}
% \footnotetext[1]{IMPA, Estr. D. Castorina 110, 22460-320 Rio de Janeiro, Brazil, \email{vvla@impa.br}}
% \footnotetext[2]{IMEF/FURG, Av. Italia km 8, 96201-900 Rio  Grande, Brasil,\, \email{decezaro@impa.br}}
% \footnotetext[3]{IMPA, Estr. D. Castorina 110, 22460-320 Rio de Janeiro, Brasil,\, \email{zubelli@impa.br}}
% \renewcommand{\thefootnote}{\arabic{footnote}}

%%%%%%%%%%%%%%%%%%%%%%%%%%%%%%%%%%%%%%%%%%%%%%%%%%%%%%%%%%%%%%%%%%%%%%%%%%%%%%%%%%%%%%%%%%%
\begin{abstract}
We apply convex regularization techniques to the problem of calibrating the local volatility surface model of Dupire taking into account the practical requirement of discrete grids and noisy data. 
Such requirements are the consequence of bid and ask spreads, quantization of the quoted prices and lack of liquidity of option prices for strikes far way from the at the money level. 

We obtain convergence rates and results comparable to those obtained in the idealized continuous setting. 
Our results allow us to take into account separately the uncertainties due to the price noise and those due to discretization errors. Thus allowing better discretization levels both in the domain and in the image of the parameter to solution operator.

We illustrate the results with simulated as well as real market data. We also validate the results by comparing the implied volatility prices of market data with the computed prices of the calibrated model.
\vspace{10pt}

\noindent {\bf Keywords:} Convex regularization,
local volatility surfaces, 
regularization convergence rates, 
numerical methods for volatility calibration.
\vspace{10pt}

\noindent {\bf AMS subject classifications:} 91G60 (65M32 91B70)
\end{abstract}

% \begin{AMS}
% Convex regularization,
% local volatility surfaces, 
% regularization convergence rates, 
% numerical methods for volatility calibration.
% \end{keywords}
% 
% \begin{AMS}
% 91G60 (65M32 91B70)
% \end{AMS}

\pagestyle{myheadings}
\thispagestyle{plain}
\markboth{V. ALBANI, A DE CEZARO AND J.~P. ZUBELLI}{Local Volatility Estimation in a Discrete Setting}

%-----------------------------------------------------------------------------------------------
\section{Introduction} \label{sec:introd}
During the last two decades we have witnessed an explosion of
interest on mathematical models in Finance. Indeed, the need for
tools to understand risk and volatility in equity and commodity
prices is crucial for the financial industry.
A well accepted class of models consists of an extension of the Black-Scholes 
model known as local-volatility models. It was pioneered
by B.~Dupire in \cite{Dupre-1994} for the continuous case and has
been a standard in many financial applications.

Local volatility models subsume that the volatility coefficient in the stochastic differential equation that determines the dynamics of the underlying is a function of time and of the underlying at that time. The calibration of such surface becomes fundamental for the consistent pricing of more complex derivatives.

In this article we are concerned with the calibration of
local-volatility models from option market data in a realistic setting that 
incorporates the availability of noisy and discrete financial data. 
Mathematically, it is connected to the identification of a (nonconstant) diffusion
coefficient in a parabolic equation. It is well known that such
calibration problem, as many important ones in Mathematical Finance, is highly
ill-posed. In particular, small changes and noise in the data may
lead to substantial changes in the results. Yet, good volatility
surface calibration is crucial in a plethora of applications,
including risk management, hedging, and the evaluation of exotic
derivatives. In order to tackle the ill-posedness of the calibration
we make use of (non quadratic) Tikhonov-type regularization
techniques and extend previous work \cite{DeCezaroScherzerZubelli09,DCZ2011}.
More precisely, we extend the results presented in \cite{DeCezaroScherzerZubelli09,DCZ2011} for a discrete setting, under less restrictive assumptions than the ones used in \cite{PRS2010}.

The introduction of regularization techniques in order to 
stabilize the problem leads to a crucial question: If the noise in
the data goes to zero, does the corresponding regularized solution
converge to the true volatility? If this is the case, it would be
also natural to inquire about the rate of convergence. 

In the sequel, we shall respond such questions by recalling some results present in literature \cite{DCZ2010,DCOZ2010, DeCezaroScherzerZubelli09}. Such results concern existence and stability of the regularized solutions as well as convergence and convergence rate in terms of the noise level.

%%%%%%%%%%%%%%%%%%%%%%%%
\paragraph{The novelty and main contributions of the present article}

While keeping the underlying model in a context of partial differential equations and continuous variables, we apply a discretization process that determines the level of regularization in the coarse grid associated to the available data. This brings the problem from a context of infinite dimensional identification to a finite dimensional one in a regularized way. 
At this point, our contribution is the proof of results concerning the regularizing properties of finite dimensional solutions, the noise level and the levels of discretization in the range and in the domain of the forward operator. 
In particular, the convergence rate result, under this setting, shows that there is an intrinsic relation between the level of discretization in the domain and the noise of the observed prices. 
More precisely, given the noise level and the level of discretization in the range of the forward operator, there is an optimal level of
discretization in the domain $\mathcal{D}(F)$ of $F$ that minimizes the distance between the regularized solution and the true local volatility surface.

It can be intuitively interpreted as follows:\\
{\em If we use very few elements in the
discretization of $\domain{F}$, the resulting surface could not
capture the variability of the original local volatility, i.e., the regularized solution would be almost constant. On the other hand, if
we use too many elements, the inverse problem would be under-determined. Thus, given the level of uncertainties associated to the data, there should be an optimal choice for the discretization level of $\domain{F}$.}\\
This can be seen as a discrepancy principle w.r.t the level of discretization in the domain of $F$.
Therefore, choosing an appropriate level of discretization in the domain of the operator $F$ is crucial for achieving reliable solutions.
One way of obtaining this is by accounting separately for the uncertainty contributions in the discretization levels for the domain and image, and the noise
level from market bid and ask spread when implementing the inverse problem analysis.  
The convergence rate result in Section~\ref{sec:discrete}, is, in
certain sense, a theoretical evidence of this feature, since it
states how distant, in an appropriate sense, a reconstructed local
volatility surface is from the true solution in terms of the three
quantities above mentioned.

This discrete setting gives us sufficient conditions to explore
better the third main point of the present work, the numerical
implementation of different regularization approaches, that allows
us to illustrate the theoretical results presented in Section~\ref{sec:discrete}. This contribution is
presented in Section~\ref{sec:numerical}.

%%%%%%%%%%%%%%%%%%%%%%%%%%%%%%%%%%%%%%%%%%%%%%%%%%%%%%%%%%%%%%%%%%%%%%%%%%%%%%%%%%%%%%%%%%%%%%%%%%%%%%%%%%%%%%%%%%%%%%%%%%%%%%%%%%%%%%%%%
\paragraph{The Setting and the Inverse Problem:}

We are concerned with the problem of local volatility
calibration from European call options. 
Different versions of the calibration problem have been 
addressed by many authors using techniques different 
from those described here. See \cite{achpi, aveSMF,aveICM, aveIJTAF, ABFGKN2000, AFHS97, BBF2000,BouIsak1997, Crepey-2003,dermankanizou,Egger-Engl2005, HKPS-2007,Hof-Kra-2005,JackSuliHow99,jackwerth,lagnado,rlee2001,rlee2005,samperi}.%CLV1998,,stutzer

We recall that, an European call is a contract that gives the right, but not
the obligation, of paying a fixed amount, known as strike price, for
one share of its underlying asset at maturity. To price such contract in 
the context of complete markets, we consider a filtered
probability space $(\Omega,\mathcal{G},\mathbb{F},\mathbb{Q})$,
where $\mathbb{F}$ is a filtration and $\mathbb{Q}$ is the risk
neutral measure. Denote by $S_t = S(t,\omega)$, with $\omega \in
\Omega$ and $t > 0$, the price at time $t$ of the above mentioned
asset. We assume that $S_t$ follows the It\^o's dynamics %%% ADC modifications "below:"
\begin{equation}
dS_t = rS_tdt + \sqrt{\nu_t}S_tdW^{\mathbb{Q}}_t,~ ~ ~t>0,~ ~ ~S(0) = S_0 > 0,
\label{ito1}
\end{equation}
where $W^{\mathbb{Q}}$ is a $\mathbb{Q}$-Brownian motion and $\nu_t$
is the stochastic volatility.

In the present context, the price at time $t$ of an European call
with strike $K>0$ and maturity $T \geq t > 0$, written on the asset
$S_t$, can be represented as 
the discounted expectation of the payoff $\max\{0, S_T - K\}$  in
the risk neutral measure $\mathbb{Q}$, i.e.,
\begin{equation}
C(t,S_t,T,K) = \mathbb{E}^{\mathbb{Q}}\left[\left.\text{e}^{-r(T-t)}\max\left\{0,S_T - K\right\} \right|\mathcal{F}_t\right],
\label{oprice}
\end{equation}
where $r \geq 0$ is the annualized risk free interest rate.

We assume that the volatility term in Equation~(\ref{ito1}) is a
deterministic function of time and asset price:
\begin{equation}
 \sqrt{\nu_t} = \sigma(t,S_t).
\end{equation}
It follows that, when we fix the current time and the
current stock price, by setting $t=0$ and $S(t=0) = S_0$, the option price
$C = C(T,K)$, as a function of its maturity and strike, satisfies Dupire's equation~\cite{Dupre-1994}. 
More precisely, making the change of variables $y = \log(K/S_0)$ and $\tau = T-t$, and defining
$$
u(\tau,y) = C(t + \tau,S_0\text{e}^y) ~~\text{ and }~~ a(\tau,y) = \sigma^2(\tau,S_0\text{e}^y)/2,
$$
the option prices $u(\tau,y)$ satisfy:
\begin{equation}
\left\{
\begin{array}{rcll}
-u_\tau + a(u_{yy} - u_y) - ru_y &=& 0, & 0< \tau\leq T,~y \in I \subset \mathbb{R} \\
\\
u(\tau=0,y) & = &S_0 (\text{e}^y - 1)^+, & y \in I \subset \mathbb{R}\\
\\
\displaystyle\lim_{y\rightarrow -\infty}u(\tau,y) &=& S_0, & 0< \tau\leq T,\\
\\
\displaystyle\lim_{y\rightarrow +\infty}u(\tau,y) &=& 0, & 0< \tau\leq T,
\end{array}
\right.
\label{1}
\end{equation}
where $I \subset\mathbb{R}$ is an interval. 
Note that $I$ may be unbounded. 

Note that, in the present setup (\ref{ito1})-(\ref{1}), local volatility is non-observable and determines the distribution of the underlying asset. Thus, developing reliable calibration methods for this quantity is crucial in a plethora of applications, such as, hedging strategies based on pricing exotic derivatives. 

Denote by $D: = (0,T] \times I$ the domain of the definition of
Equation~\eqref{1}.
In order to address the volatility calibration problem in a general and rigorous manner, we define below the parameter-to-solution
operator:

\begin{equation}
\begin{array}{rcl}
F:\domain{F} \subset \X & \longrightarrow& \Y,\\
a &\longmapsto& F(a) = u(a)-u(a_0)
\end{array}
\label{operator}
\end{equation}
where $\domain{F} := \{a \in a_0 + \X: \underline{a} \leq a \leq
\overline{a}\}$, with $0< \underline{a} < \overline{a}$ constants
and $a_0 \in \X$ given. This operator associates each local
volatility $a \in \domain{F}$ to the difference $u(a)-u(a_0)$, where
$u(a)$ and $u(a_0)$ are the unique solutions for Problem (\ref{1}) with
local volatilities $a$ and $a_0$, respectively. The set $\X$ with
$\varepsilon \in (0,1]$ is a fractional Sobolev space \cite{Ada75}. We also assume that we observe the solutions of Problem~(\ref{1}) in $\Y$.

Note that, originally a solution for Problem~(\ref{1}) is only
locally summable. Thus, the introduction of $u(a_0)$ in the
definition of $F$ is necessary since $u(a) - u(a_0)$ lies in
$W^{1,2}_p(D)$. This is the space of functions with weak derivatives up to
order one in time and up to order two in space, all them in
$L^p(D)$. For results concerning the existence and uniqueness of solutions for
Problem~(\ref{1}) and regularity properties of the operator defined
in (\ref{operator}), see \cite{Crepey-2003, DCZ2010, Egger-Engl2005}
and references therein.

%%%%%%%%%%%%%%%%%%%%%%%%%%%%%%%%%%%%%%%%%%%%%%%%%%%%%%%%%%%%%%%%%%%%%%%%%%%%%%%%%%%%%%%%%%%%%%%%%%%%%%%%%%%%%

As above mentioned, in this article we are concerned with the problem of calibrating the local volatility surface from traded European call option prices. This so-called inverse problem can be stated as: \\
{\em Given a set of traded option prices $\widetilde{u} \in \Y$, find the associated local volatility surface $a^\dagger \in \domain{F}$ satisfying the following operator equation:}
\begin{equation}
 \widetilde{u} = F(a^\dagger) + u(a_0).
\label{eq:operator1}
\end{equation}
%%%%%%%%%%%%%%%%%

One solution for this inverse problem was proposed by Dupire~\cite{Dupre-1994}, where, under no-arbitrage conditions, we
could find local volatility through the formula (in log-moneyness variables):
\begin{equation}
 a(\tau,y) = \displaystyle\frac{ru_y -u_\tau}{u_{yy} - u_y}.
\label{dupform}
\end{equation}
Unfortunately, this formula is not applicable in practice, since it implies in differentiating a noisy data, which is given in a sparse grid.

Note that, when stating Problem~(\ref{eq:operator1}), we have assumed
that the data $\widetilde{u}$ is known with infinite precision in
the whole domain. In order to be more realistic, we consider
that, when measuring the data we have some sources of uncertainties and
noise, which arise specially from the sparsity of data, bid-ask spread,
delays and interpolation.
In addition, it is well known that the forward operator $F$, defined in
(\ref{eq:operator}), is compact. See \cite{DCOZ2010,
DeCezaroScherzerZubelli09, Egger-Engl2005}. It implies that small perturbations in the data may lead to substantial changes in the
reconstructed solution. Therefore, we state the local volatility calibration problem in more mundane setting as follows:\\
{\em Given the set of traded prices $u^\delta$, find its corresponding local volatility surface $a^\dagger \in \domain{F}$ from the operator equation}
\begin{equation}
 u^\delta = \widetilde{u} + E = F(a^\dagger) + u(a_0) + E,
\label{eq:operator}
\end{equation}
where $\widetilde{u}$ represents the unobservable noiseless data and
$E$ is a quantity compiling and quantifying some of the
uncertainties concerned with the model. The set of noiseless prices
are associated to $a^\dagger \in \domain{F}$ through
Problem~(\ref{eq:operator1}).

Denote by $\eta > 0$ an upper bound for the magnitude of $E$. In what follows, we assume that $\eta$ is known {\it a priori} and $E$ satisfies:
\begin{equation}
\|\widetilde{u} - u^\delta\|_{\Y} = \|E\|_{\Y} \leq \eta.
\label{eq:noise-know}
\end{equation}

In Section~\ref{sec:discrete}, we shall express $\eta$ in terms of the following three different quantities:
\begin{enumerate}
 \item The discretization level on the domain of $F$, which we denote by $\gamma_n \geq 0$;
 \item The discretization level on the range of $F$, which we denote by $\rho_m \geq 0$;
 \item The noise level concerning the financial market uncertainties, which we denote by $\delta > 0$, where $n$ and $m$ are, respectively, the number of elements in the discretization of the domain and range of $F$.
\end{enumerate}

\paragraph{Notation, Definition and Assumptions}
In the remaining part of this section we introduce some
important assumptions and tools in order to proceed with the inverse problem analysis under the convex
regularization framework \cite{Bur-Osher-2004, DeCezaroScherzerZubelli09, HKPS-2007, Resm-2005, Resm-Scher-2005}.
\begin{assumption}\label{ass:1}
The regularization functional $f_{a_0}:\domain{f} \subset \X
\longrightarrow [0,\infty]$ is convex, proper and weakly lower
semi-continuous. Its domain $\domain{f}$ contains
$\domain{F}$. \label{ass:domain_f}
\end{assumption}

Another important tool is the Bregman distance with respect to the functional $f_{a_0}$:

\begin{df}\label{df:subdiff}
Let the functional $f$ satisfy Assumption~\ref{ass:domain_f}. For a
given $a \in \domain{f}$, let $\partial f (a) \subset \X^*$ denote the
sub-differential of the functional $f$ at $a$. We denote by
\begin{align*}
{\cal D} (\partial f) =\{ \tilde{a} : \partial f (\tilde{a})\neq
\emptyset\}
\end{align*}
the domain of the sub-differential $\partial f (a)$ \cite{Clarke-1983}. The Bregman
distance with respect to $\zeta \in \partial f(a_1)$ is defined on
$\domain{f} \times {\cal D} (\partial f)$ by
$$
D_{\zeta}(a_2,a_1)=f(a_2)-f(a_1)-\langle\zeta, a_2-a_1\rangle\;.
$$
In particular, if $f$ is differentiable, $\zeta = f'(\cdot)$.
\end{df}

It is also important to define the Fenchel conjugate:
\begin{df}\label{df:Fenchel_duality}
Let the functional $f$ satisfy Assumption~\ref{ass:domain_f}. The
function $f^*: \X^* \to \mathbb{R}$ given by
$$ f^*(x^*):= \sup_{x\in \X}\{\langle x, x^* \rangle -
f(x)\} $$
is called the Fenchel conjugate of $f$ \cite{Rockafellar74}.
\end{df}

Finally, the concept defined below is strongly related to the convergence rates results of Section~\ref{sec:convex}:
\begin{df}\label{def:q-coercivity-bregman}
Let $1\leq q< \infty$ be fixed. The Bregman distance $D_\zeta(\cdot,
\tilde{a})$ of $f: \X \to \mathbb{R} \cup \{+\infty\}$ at
$\tilde{a}\in \mathcal{D}_B(f)$ and $\zeta \in
\partial f$ is said to be $q$-coercive  with constant
$\underline{c}> 0$ if
\begin{align}\label{eq:bregman-coercive}
D_\zeta(a,\tilde{a}) \geq \underline{c} \norm{a - \tilde{a}}^q_{\X}
\mbox{, } \quad \forall a\in \domain{f}.
\end{align}
\end{df}

%-----------------------------------------------------------------------------------------------
%-----------------------------------------------------------------------------------------------
\section{Convex Regularization: A Review of Literature on Local Volatility Calibration}\label{sec:convex}

It is well known, theoretically as well as in practice 
\cite{DeCezaroScherzerZubelli09, Crepey-2003,
Egger-Engl2005,Hof-Kra-2005, HKPS-2007}, that
the local volatility calibration is a highly ill-posed inverse problem. In other
words, small changes and noise in the data may lead to
substantial changes in the reconstructions. Given the importance of good
model selection for the volatility surface, a great amount of effort
had been made in order to tackle the ill-posedness of the
calibration \cite{DeCezaroScherzerZubelli09, Crepey-2003,
Egger-Engl2005,Hof-Kra-2005, HKPS-2007,JackSuliHow99}.

For notational simplicity we formulate this calibration problem in an infinite dimensional framework.
Hereafter, we shall assume again that $D$ is a bounded subdomain of
$\mathbb{R}^2$. Within this framework, we remark that $ \domain{F}
\subset \X\cap L^\infty_{>0}(D) \subset L^1(D)$, where
$L^\infty_{>0}(D)$ is the set of functions that are (essentially)
bounded from below and above by some positive constants.

Successful strategies to stabilize the problem are connected 
to the introduction of
regularization techniques. In particular, many regularization
techniques have been proposed in order to overcome the stability
difficulties, e.g. \cite{DeCezaroScherzerZubelli09, Crepey-2003,
Egger-Engl2005,Hof-Kra-2005, HKPS-2007} and references therein. 
The most common regularization
strategy for volatility calibration in the literature includes
entropy based regularization \cite{AFHS97}, or convex
type regularization  \cite{DeCezaroScherzerZubelli09, Crepey-2003,
Egger-Engl2005,Hof-Kra-2005, HKPS-2007}. 
A unified approach of those
Tikhonov-type regularization was derived in \cite{DeCezaroScherzerZubelli09} by means of the
minimization of the Tikhonov functional
\begin{equation}
\F_{\beta,\ud}(a):= \|F(a)-\ud\|^2_{\Y}+\beta f_{a_0}(a)
\label{eq:reg-functional}
\end{equation}
over $ \domain{F}$. 
In \eqref{eq:reg-functional}, $\beta > 0$ is the regularization
parameter and $f_{a_0}(\cdot)$ is the regularization functional that
is assumed to satisfy the Assumption~\ref{ass:1}.
In particular, we have the following examples:

\begin{ex}[Standard Quadratic Regularization]\label{ex:quadratic}
Here we consider:
\begin{equation}
f_{a_0}(a) = \norm{a-a_0}^2_{L^2(\Omega)}\;,
\label{eq:standard-Tikhonov-functional}
\end{equation}
A particular example that we are interested in is the finite
dimensional version of the Tikhonov-type
functional~\eqref{eq:standard-Tikhonov-functional} that reads as
follows: Let $\{\phi_n\}$ be an orthonormal basis for $L^2
(\Omega)$. Then, consider the minimization of the functional
\begin{equation}\label{eq:Tikhonov-Lq}
\a \longmapsto \F_{\beta,\ud}(\a) :=
\norm{F(\a)-\ud}_{L^2(\Omega)}^2 + \beta \sum_{j=1}^n  |\langle a -
a_0, \phi_j\rangle|^2\,.
 \end{equation}
This Tikhonov functional can be interpreted as a finite dimensional
version of the quadratic regularization. In particular, this is an
interesting penalization functional to consider in the discrete
regularization approach in Section~\ref{sec:discrete} and for
numerical purposes in Section~\ref{sec:numerical}.
\end{ex}

\begin{ex}[Kullback-Leibler Regularization]\label{ex:KL}
Let us now consider:
\begin{equation}
f_{a_0}(a) = \KL{a_0}{\a}\;,
\label{eq:Tikhonov-functional}
\end{equation}
where
\begin{equation*}
 \KL{a_0}{\a} = \int_D \a \log (a_0/ \a) - (a_0 - \a) \, dx\;\mbox{ .}
\end{equation*}
\end{ex}
Note that the Kullback-Leibler distance is the Bregman distance
associated to the Boltzmann-Shannon entropy
\begin{equation}
{\cal G}(\a):=\int_D \a \log (\a) \, dx\;.
\end{equation}

\begin{rm}
 One important motivation for convex regularization follows from the fact
that, in some cases, if the sequence $\{a_k\}$ converges weakly to
$a$ and also satisfies that
$f_{a_0}(a_k)\longrightarrow f_{a_0}(a)$ as $k \rightarrow \infty$
then it convergences strongly. This property is known as the
Radon-Riesz, Kadec-Klee property, or yet H-property
\cite{Zalinescu02}. In particular, for locally uniformly convex and
reflexive Banach spaces and some Besov spaces, such a property is
fulfilled when $f_{a_0} = \norm{\cdot}^p$, for $1 < p < \infty$. See
\cite{Ada75, EkeTem76, Zalinescu02}. Moreover, when we consider
$L^1$ with its weak topology and $f_{a_0}$ as the Boltzmann-Shannon
entropy, such property is satisfied. See \cite{Zalinescu02}.
\label{rmk-Hprop}
\end{rm}

\begin{remark}
The domains of ${\cal G}$,  $\domain{{\cal G}}$,  and of the
subgradient of ${\cal G}$, $\domain{\partial {\cal G}}$, are
$L_{\geq 0}^\infty(D)$ (the set of bounded non-negative functions)
and $L_{> 0}^\infty(D)$, respectively.

The Kullback-Leibler distance, which is the Bregman distance of the
Boltz\-mann-Shannon entropy, is defined on the Bregman domain ${\cal
D}_B({\cal G})$, that is a subset of $L_{>0}^\infty(D)$. Moreover,
the Kullback-Leibler distance is lower semi-continuous with respect
to the $L^1$-norm \cite{ResAnd07}. Based on this property we extend
the Kullback-Leibler distance, to take value $+\infty$ if either $\a
\notin \domain{\cal{G}}$ or $b \notin {\cal D}_B({\cal G})$.

Note that, there are exceptional cases, when the integral
$$
\int_D \a \log (\a/ a_0) - (\a - a_0)\,dx
$$
is actually finite, but $\KL{\a}{a_0} = \infty$. This can be
seen by taking for instance $a \in L_{>0}^1(D)$ which is not in
$L^\infty(D)$ and $a_0=C \a$, where $C$ is a constant. The
reason here, is that $\a$ is not an element of the subgradient of
the Boltzmann-Shannon entropy. This follows directly from the
definition of the domain of the convex functionals and their subgradients.
\end{remark}

%%%%%%%%%%%%%%%%%%%%%%%%%%%%%%%%%%%%%%%%%%%%%%%%%%%%%%%%%%%%%%%%%%%%%%%%%%%%%%%%%%%%%%%%%%%%%%%%%%%%%%%%%%%%%%%%%%%%%%%%%%%%%%%5
\subsection{Stability and Convergence}\label{sec:stability}

In this section we shall present the result on well-posedness,
stability and convergence of minimizers of the Tikhonov functional
$\F_{\beta,\ud}$ defined in \eqref{eq:reg-functional}.

Consider the forward operator $F$ defined in \eqref{eq:operator} and the spaces 
$\X$ and $\Y$ with its weak topologies. It follows from \cite{DeCezaroScherzerZubelli09} 
that for any $\beta >0$ and $\ud \in \Y$, there exists a minimizer of $\F_{\beta,\ud}$. In other words, the regularized problem has a solution.
In addition, we have that such minimizers are stable in the following sense: 
If $(u_k)$ is a sequence converging to $u$ in
$\Y$ with respect to the norm topology, then every sequence $(a_k)$
with
\begin{equation*}
a_k \in \argmin \bigl\{\F_{\beta,u_k}(a): a \in {\cal D}\bigr\}
\end{equation*}
has a subsequence which converges weakly. The limit of every weak
convergent subsequence $(a_{k'})$ of $(a_k)$ is a minimizer $\tilde{a}$ of $\F_{\beta,u}$.
We also have that, when the noise level vanishes, the regularized solutions converge to the true local volatility surface in the following sense:
Let also the map $\beta: (0,\infty) \to (0,\infty)$
satisfy
$$
\beta(\delta)\to 0 \text{ and } \delta^2/\beta(\delta) \to 0\,,
\text{ as } \delta \to 0\;.
$$
Moreover, let the sequence $(\delta_k)$ converge to $0$. Assume that
$(u_k)$ with $u_k := u^{\delta_k}$ is a sequence in $\Y$ satisfying
$\norm{\bar{u} - u_k} \leq \delta_k$. Set $\beta_k :=
\beta(\delta_k)$. Then, every sequence $(a_k)$ of minimizers of
$\F_{\beta_k,u_k}$, has a subsequence $(a_{k'})$ converging weakly.
Each limit $a^\dagger$ of such subsequences is an $f_0$-minimizing
solution of \eqref{eq:operator}, i.e., a solution of \eqref{eq:operator} that is a minimum of $f_{a_0}(\cdot)$.
In other words, when the uncertainty level $\delta$ goes to zero and the parameter of regularization
$\beta = \beta(\delta)$ is properly chosen, then the approximate solutions
converge to the solution of \eqref{eq:operator}. An illustration of this theorem is given in Section~\ref{sec:numerical}.

From the above discussion it is easy to see that the convex Tikhonov-type functional defined in Example~\ref{ex:quadratic} has stable minimizers that converge to the true local volatility surface when the noise level vanishes. However, for the Kullback-Leibler regularization approach of Example~\ref{ex:KL}, it is necessary to make additional assumptions. Indeed, in order to state the convergence analysis for the Kullback-Leibler regularization, it is necessary to consider the weak-to-weak convergence of the forward operator (see Theorem~\ref{th:prop-F} item (ii)) combined with the following inequality \cite{ResAnd07},

\begin{align}
\norm{a- b}^2_{L^2(\Omega)} \leq 2^{-1} KL(a,b) \qquad \forall
a,b\in \mathcal{D}(\mathcal{G})\,.
\label{eq:KL2}
\end{align}
It guarantees the closedness of the graph of the Tikhonov functional \eqref{eq:Tikhonov-functional} with Kullback-Leibler regularization. See \cite[Theorem 27]{DeCezaroScherzerZubelli09}.

%%%%%%%%%%%%%%%%%%%%%%%%%%%%%%%%%%%%%%%%%%%%%%%%%%%%%%%%%%%%%%%%%%%%%%%%%%%%%%%%%%%%%%%%%%%%%%%%%%%%%%%%%%%%%%%%%%%%%%%%%%%%%%%%%%%%%%
\subsection{Convergence Rates}\label{sec:att-sc}

Under the same assumptions of the previous sections, we now recall a convergence rate result for the regularized solutions in terms of the noise level $\delta$. 
Such estimate is based on a prior knowledge about the true 
solution. This is the so-called source condition \cite{EngHanNeu96}. 
By \cite[Lemma~14]{DeCezaroScherzerZubelli09}, for this specific problem, we can guarantee the existence of $w \in \Y^*$ and $r \in \X^*$ such that, the following approximated source condition holds
\begin{align}\label{eq:app-souce-cond}
\zeta^\dag = F'(a^\dag)^* w + r \qquad \mbox{for} \qquad \zeta^\dag
\in
\partial f(a^\dag)\,.
\end{align}

Consider the forward operator $F$ defined in \eqref{eq:operator}. Let
the source condition \eqref{eq:app-souce-cond} be satisfied with
$w^\dag$ and $r$ such that
$$
\big(\underline{c} \|w^\dag\|_{\Y} + \|r\|_{\Y}\big) :=\beta_1 \in [0,1).
$$
Let also the Bregman distance with respect to $f_{a_0}$ be $q$-coercive with constant $\underline{c}\geq e^{-2}$, as in Definition~\ref{def:q-coercivity-bregman}.
Moreover, let $\beta: (0,\infty) \to (0,\infty)$ satisfy $\beta(\delta) \sim  \delta$. 
Then, by \cite[Theorem~12 and Lemma~16]{DeCezaroScherzerZubelli09}, it follows that
\begin{equation*}
    D_{\zeta^\dag}(a_\beta^\delta,a^\dagger) = \mathcal{O}(\delta)\,,
     \quad
    \left \| F(a^\delta_\beta)-u^\delta \right\|_{\Y} = \cal O(\delta)\;,
\end{equation*}
and there exists $c>0$, such that $f(a_\beta^\delta) \leq
f(a^\dagger) + \delta/c$ for every $\delta$ with $\beta(\delta) \leq
\beta_{\text{max}}$.
Thus, the results concerning the existence and stability of regularized solutions added to the convergence and convergence rate results imply the H-property of Remark~\ref{rmk-Hprop}. The latter, in turn, depends on the uncertainty level.

Note that the $q$-coercivity assumption of the Bregman distance implies that
\begin{equation}
 \label{eq:ip:coercive_rate}
\norm{a_\beta^\delta - a^\dag}_{\X}  =
\mathcal{O}((\delta)^{\frac{1}{q}})\;.
\end{equation}
In particular, for the Kullback-Leibler regularization we have \cite{DeCezaroScherzerZubelli09} (according to the choice of $f_{a_0}$):

\begin{align}\label{eq:L1convergenvce}
\norm{\a^\delta_\beta - \a^\dag}_{L^2(D)} =
\mathcal{O}(\sqrt{\delta})\qquad \mbox{or} \qquad
\norm{\a^\delta_\beta - \a^\dag}_{L^1(D)} =
\mathcal{O}(\sqrt{\delta}) \,.
\end{align}

In the next section we address the regularization of the volatility calibration problem 
in a discrete setting, providing convergence and convergence rates results in terms of the noise level and the discretization levels in domain and range of the forward operator.

%-----------------------------------------------------------------------------------------------
\section{The Discrete Setting}\label{sec:discrete}
In order to obtain more practical error estimates for the numerical solution of the
Inverse Problem \eqref{eq:operator}, we analyze the Tikhonov-type regularization
presented in Section~\ref{sec:convex} in a discrete setting. The discreteness is modeled by considering fixed
nested sequences of finite-dimensional subspaces of $\X$ and $\Y$
respectively, $\{U_n\}^\infty_{n=0}$ and $\{V_m\}^\infty_{m=0}$ such
that $\overline{\cup^\infty_{n = 0}U_n} = \X$ and
$\overline{\cup^\infty_{m = 0}V_m} = \Y$. Now, define the sets
$\mathcal{D}_n = U_n \cap \mathcal{D}(F)$ for $n \in \mathbb{N}$ and
assume that they are nonempty.

We can now define the operators $F_m : \mathcal{D}(F)\subset \X
\longrightarrow V_m$ approximating $F$ and a discrete version of the
parameter $a \in \mathcal{D}(F)$, that we call $a_n \in
\mathcal{D}_n$, satisfying
\begin{equation}
\|F(a) - F_m(a)\|_{\Y} \leq \rho_m, \quad \norm{a_n - a}_{\X} \leq
\gamma_n \,, \label{p3}
\end{equation}
with  $\lim_{m\rightarrow\infty}\rho_m = 0$  and $ \lim_{n
\rightarrow \infty} \gamma_n = 0$ for all $ a \in \mathcal{D}(F)$. 
In the numerical experiments of Section~\ref{sec:numerical}, the spaces $U_n$ are generated by finite sets  of
finite element functions and the operators $F_m$ are  suitable finite difference operators associated to the Problem (\ref{1}).

In this setting the discrete version of the Tikhonov-type
calibration inverse problem takes the form: 

\noindent Find a solution for the
minimization problem of the following Tikhonov functional
\begin{equation}
\F_{\beta, \ud}^{m,n}:=\|F_m(a) - u^\delta\|^2_{\Y} + \beta
f(a)\quad \mbox{subject to a }\, \in
\mathcal{D}_n, 
\label{eq:discrete-functional}
\end{equation}  where $u^\delta$ is a prices surface satisfying
Equation~\eqref{eq:noise-know}.

As in Section~\ref{sec:convex}, we use convex regularization
techniques to ensure the existence, stability of solution and
convergence rates of the approximate solution sequence of the inverse problem.

We remark that, in the discrete setting we account separately for the uncertainties
originated by the discretization of the operator $F$ and its domain of definition, and the noise introduced by the market noise such as bid and ask spread.

We start with the following auxiliary lemma:
\begin{lemma}
\begin{itemize}
\item[i)] For all $m \in \mathbb{N}$, the finite difference operator
$F_m$ is sequentially weakly closed.
\item[ii)] For every $M>0, \beta > 0$ and $m, n \in \mathbb{N}$,
the sets $\{ a \in \mathcal{D}(F)\,:\, \F_{\beta, \ud}(a) \leq M \}$
and  $\{ a \in \mathcal{D}_n(F)\, :\, \F_{\beta, \ud}^{m,n}(a) \leq
M \}$ are  sequentially weakly compact.
\end{itemize}
\label{lema:preparation}
\end{lemma}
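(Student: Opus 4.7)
The plan is to treat the two parts separately. For part (i), I will use the finite-dimensionality of the range $V_m$ of $F_m$ together with the Sobolev embedding to reduce weak closedness to a continuity statement for the finite linear system underlying the scheme. For part (ii), I will combine boundedness extracted from the regularization term together with the pointwise constraints, and then conclude via weak lower semi-continuity of the data-fidelity and penalty terms.

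\textbf{Part (i).} Suppose $a_k\rightharpoonup a$ in $\X=H^{1+\varepsilon}(D)$ with $a_k\in\mathcal{D}(F)$, and $F_m(a_k)\rightharpoonup v$ in $\Y$. Since $V_m$ is finite-dimensional, weak and norm convergence in $V_m$ coincide. Because $D\subset\mathbb{R}^2$ is bounded and $1+\varepsilon>1=d/2$, the embedding $H^{1+\varepsilon}(D)\hookrightarrow C(\overline D)$ is compact, so a subsequence of $(a_k)$ converges uniformly to $a$; in particular the pointwise bounds $\underline{a}\le a\le\overline{a}$ are preserved and $a\in\mathcal{D}(F)$. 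The finite-difference operator $F_m$ produces $F_m(a)\in V_m$ by solving a finite nonsingular linear system whose matrix entries depend continuously on the values of $a$ at the stencil points; therefore $F_m:(C(\overline D),\|\cdot\|_\infty)\to V_m$ is continuous, $F_m(a_k)\to F_m(a)$ in $V_m\subset \Y$, and by uniqueness of the weak limit $v=F_m(a)$.

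\textbf{Part (ii).} Let $(a_k)$ lie in the level set $\{a\in\mathcal{D}(F):\F_{\beta,\ud}(a)\le M\}$. The membership in $\mathcal{D}(F)$ yields a uniform $L^\infty$ bound on $(a_k)$, and $\F_{\beta,\ud}(a_k)\le M$ gives $f_{a_0}(a_k)\le M/\beta$ as well as $\|F(a_k)-u^\delta\|_{\Y}^2\le M$. For the regularizers of Examples~\ref{ex:quadratic}--\ref{ex:KL}, these two pieces of information combine (using \eqref{eq:KL2} together with the pointwise lower bound in the entropy case) to bound $(a_k)$ uniformly in $\X$. Reflexivity of $\X$ then provides a weakly convergent subsequence $a_{k_l}\rightharpoonup a^*$. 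The box-constraints defining $\mathcal{D}(F)$ form a convex and $L^2$-closed, hence weakly closed, set, so $a^*\in\mathcal{D}(F)$; Assumption~\ref{ass:1} gives weak lsc of $f_{a_0}$, while weak-to-weak closedness of $F$ combined with weak lsc of $\|\cdot\|_{\Y}$ gives weak lsc of the residual, whence $\F_{\beta,\ud}(a^*)\le\liminf_l\F_{\beta,\ud}(a_{k_l})\le M$. The argument for the discretized functional $\F^{m,n}_{\beta,\ud}$ is the same, except easier: the admissible set $\mathcal{D}_n$ lies in the finite-dimensional subspace $U_n$, so boundedness (again extracted from the $L^\infty$ constraints and the penalty) immediately yields a norm-convergent subsequence, and weak lsc of the residual follows from part (i) applied to $F_m$.

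The delicate step is the extraction of a uniform $\X$-bound from $f_{a_0}(a_k)\le M/\beta$ and the $L^\infty$ constraints in the continuous functional: this genuinely uses a coercivity property of $f_{a_0}$ on $\mathcal{D}(F)$ which must be verified for each concrete choice of regularizer, and is where the structural properties of the examples in Section~\ref{sec:convex} become essential; all remaining pieces (weak closedness of the box, weak lsc of $F$ and $f_{a_0}$, uniqueness of weak limits in $V_m$) are standard.
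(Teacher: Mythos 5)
Your proof is correct in its overall structure and arrives at the same conclusion as the paper's, but it is considerably more detailed, and for item (i) it takes a genuinely different route. The paper disposes of item (i) in one line by invoking the weak sequential continuity of $F$ (Theorem~\ref{th:prop-F}) together with ``the definition of the finite difference methods''; you instead argue directly from the compact embedding $H^{1+\varepsilon}(D)\hookrightarrow C(\overline{D})$ (valid since $1+\varepsilon>d/2=1$ for bounded $D\subset\mathbb{R}^2$) and the continuity of the discrete solve with respect to the coefficient values at the stencil points. Your route is more elementary and self-contained, at the price of an unstated but reasonable hypothesis that the scheme's linear systems are uniformly invertible for $\underline{a}\le a\le\overline{a}$. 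For item (ii) the two arguments coincide in their essential ingredients: weak closedness of the convex, closed constraint sets $\mathcal{D}(F)$ and $\mathcal{D}_n$, coercivity plus weak lower semi-continuity of $f_{a_0}$, and weak lower semi-continuity of the residual (via item (i) in the discrete case). One caution: your intermediate assertion that for the quadratic and Kullback--Leibler penalties of Examples~\ref{ex:quadratic} and~\ref{ex:KL} the bound $f_{a_0}(a_k)\le M/\beta$ together with the $L^\infty$ box constraints yields a uniform bound in $\X$ is too strong --- those functionals are coercive only in $L^2$, respectively $L^1$, so without further hypotheses the level sets are weakly sequentially compact only in those weaker topologies, not in $H^{1+\varepsilon}(D)$. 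Your closing paragraph correctly identifies coercivity of $f_{a_0}$ on $\X$ as the property that must actually be supplied; it is worth noting that the paper's own proof shares this weakness, since it appeals to a ``coercivity'' of $f_{a_0}$ that is not part of Assumption~\ref{ass:1}.
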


\begin{proof}
Item i) follows from the weak sequential continuity of $F$ (see
\cite[Chapter 1]{DCZ2010}) and definition of the finite different
methods \cite{Wilmott95}. To prove Item ii), we recall that, by
definition, $\mathcal{D}(F)$ and $\mathcal{D}_n(F)$ are convex and
closed. Hence, weakly closed. Therefore, coercivity and weak
semi-continuity of $f_{a_0}$ and Item i) conclude the proof.
\end{proof}

We are now ready to present the analysis of the \textit{a
priori} choice of the regularization parameter $\beta$.

%---------------------------------------------------------------------------------------------
\subsection{\textit{A Priori} Choice for the Regularization Parameter}

In this subsection, we state the well-posedness and perform a convergence analysis
of the discrete version of the calibration problem using the
Tikhonov functional of Equation~\eqref{eq:discrete-functional}.

\begin{pr}
Let $m,n \in \mathbb{N}$ and $\beta, \delta > 0$ be fixed. Moreover,
let \eqref{eq:noise-know} be satisfied. Then, for every $u^\delta
\in \Y$, there exists at least one minimizer of
\eqref{eq:discrete-functional}.

Furthermore, the minimizers of \eqref{eq:discrete-functional} 
are stable with respect to the data $u^\delta$ in the sense of
Section~\ref{sec:stability}
\label{pr:existence}
\end{pr}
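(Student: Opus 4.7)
\textit{Sketch of proof strategy for Proposition~\ref{pr:existence}:}

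The plan is to use the direct method of the calculus of variations, leveraging Lemma~\ref{lema:preparation} that has just been established. For existence, I would first fix any reference element $\tilde{a} \in \mathcal{D}_n$ (for instance a discrete approximation of $a_0$), so that $M := \F_{\beta,u^\delta}^{m,n}(\tilde{a}) < \infty$ by Assumption~\ref{ass:1}. A minimizing sequence $(a_k) \subset \mathcal{D}_n$ for $\F_{\beta,u^\delta}^{m,n}$ then lies eventually in the sublevel set $\{a \in \mathcal{D}_n : \F_{\beta,u^\delta}^{m,n}(a) \leq M+1\}$, which by Lemma~\ref{lema:preparation}(ii) is sequentially weakly compact. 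Passing to a subsequence, $a_k \rightharpoonup a^\star$ in $\X$ with $a^\star \in \mathcal{D}_n$ (since $\mathcal{D}_n$ is weakly closed, being convex and norm-closed in the finite-dimensional subspace $U_n$ intersected with $\mathcal{D}(F)$).

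It remains to show $a^\star$ is a minimizer. By Lemma~\ref{lema:preparation}(i), $F_m$ is sequentially weakly closed, so $F_m(a_k) \rightharpoonup F_m(a^\star)$ in $\Y$. Weak lower semi-continuity of the norm $\|\cdot - u^\delta\|_{\Y}^2$ combined with the weak lower semi-continuity of $f_{a_0}$ from Assumption~\ref{ass:1} yields
\begin{equation*}
\F_{\beta,u^\delta}^{m,n}(a^\star) \leq \liminf_{k\to\infty} \F_{\beta,u^\delta}^{m,n}(a_k) = \inf_{a \in \mathcal{D}_n} \F_{\beta,u^\delta}^{m,n}(a),
\end{equation*}
so $a^\star$ achieves the infimum.

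For stability, let $u_k \to u^\delta$ in $\Y$ and let $a_k \in \argmin_{a \in \mathcal{D}_n} \F_{\beta,u_k}^{m,n}(a)$. For any fixed $\tilde{a} \in \mathcal{D}_n$, minimality gives $\F_{\beta,u_k}^{m,n}(a_k) \leq \F_{\beta,u_k}^{m,n}(\tilde{a})$, and the right-hand side is uniformly bounded since $u_k$ is norm-convergent hence norm-bounded. This places $(a_k)$ in a common sublevel set, so by Lemma~\ref{lema:preparation}(ii) we extract a weakly convergent subsequence $a_{k'} \rightharpoonup \tilde{a}^\star \in \mathcal{D}_n$. Using sequential weak closedness of $F_m$, weak lower semi-continuity of $f_{a_0}$, and strong convergence $u_{k'} \to u^\delta$ in $\Y$, one concludes
\begin{equation*}
\F_{\beta,u^\delta}^{m,n}(\tilde{a}^\star) \leq \liminf_{k'\to\infty} \F_{\beta,u_{k'}}^{m,n}(a_{k'}) \leq \liminf_{k'\to\infty} \F_{\beta,u_{k'}}^{m,n}(a) = \F_{\beta,u^\delta}^{m,n}(a)
\end{equation*}
for every $a \in \mathcal{D}_n$, so $\tilde{a}^\star$ is a minimizer of $\F_{\beta,u^\delta}^{m,n}$.

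The main technical point—and really the only potentially delicate step—is handling the cross term $2\langle F_m(a_{k'}), u^\delta - u_{k'}\rangle_{\Y}$ when expanding $\|F_m(a_{k'}) - u_{k'}\|_{\Y}^2$; this vanishes in the limit because $F_m(a_{k'})$ is weakly convergent hence norm-bounded, while $u^\delta - u_{k'} \to 0$ strongly. Everything else is a routine application of the direct method made possible by the preparatory lemma.
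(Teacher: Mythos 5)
Your argument is correct and is essentially the standard direct-method proof that the paper itself invokes only by citation (to \cite[Theorem 3.23]{SchGraGroHalLen08} and \cite[Proposition 2.3]{PRS2010}, combined with Lemma~\ref{lema:preparation} and the assumptions on $f_{a_0}$); you have simply written out in full the details the authors leave implicit, including the correct treatment of the cross term in the stability step. No substantive difference in approach.
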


The proof is straightforward by the standard arguments presented in
\cite[Theorem 3.23]{SchGraGroHalLen08} and \cite[Proposition
2.3]{PRS2010} jointly with Lemma~\ref{lema:preparation} and
the assumptions on $f_{a_0}$. 

We prove now the auxiliary lemma:

%%%%%%%%%%%%%%%%%%%%%%%%%%%%%%% LEMMA
\begin{lemma}
Assume that an $f_{a_0}$-minimizing solution $a^\dag$ belongs to the
interior of $\mathcal{D}(F)$. 
Moreover, let $\rho > 0$ be small enough
so that the closed ball $B_{\rho}(a^\dag)$ is in the interior of $\mathcal{D}(F)$.
Assume that $a_n \in \mathcal{D}_n(F)\cap B_{\rho}(a^\dag)$, for $n$ sufficiently large, satisfying \eqref{p3}.

Then, $f_{a_0}(a_n) \longrightarrow f_{a_0}(a^\dag)$ as $n \rightarrow \infty$.
\label{lamma:aux}
\end{lemma}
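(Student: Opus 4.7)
The plan is to prove $f_{a_0}(a_n) \to f_{a_0}(a^\dag)$ via matching $\liminf$ and $\limsup$ inequalities. Since $\|a_n - a^\dag\|_\X \leq \gamma_n \to 0$, the sequence $(a_n)$ converges strongly (hence weakly) to $a^\dag$ in $\X$. The weak lower semi-continuity of $f_{a_0}$ postulated in Assumption~\ref{ass:1} therefore yields $f_{a_0}(a^\dag) \leq \liminf_{n \to \infty} f_{a_0}(a_n)$ at once, so the real work lies in the reverse inequality.

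For the matching upper bound I would establish continuity of $f_{a_0}$ at $a^\dag$. The hypothesis places the closed ball $B_{\rho}(a^\dag)$ inside the interior of $\domain{F}$, and by Assumption~\ref{ass:1} we have $\domain{F} \subset \domain{f_{a_0}}$; consequently $a^\dag$ is a topological interior point of the effective domain of $f_{a_0}$. For a proper, convex, lower semi-continuous functional on a Banach space whose effective domain has nonempty interior, a classical Baire category argument applied to the closed sub-level sets $\{a : f_{a_0}(a) \leq N\}$, $N \in \mathbb{N}$, intersected with a closed ball around $a^\dag$ yields local boundedness from above, which convexity then upgrades to local Lipschitz continuity at $a^\dag$ (see, e.g., \cite{EkeTem76,Zalinescu02}). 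Continuity together with the strong convergence $a_n \to a^\dag$ delivers $f_{a_0}(a_n) \to f_{a_0}(a^\dag)$, which completes the proof.

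The only delicate step is the deduction of continuity from lower semi-continuity via the abstract Baire argument above. For a more self-contained presentation one can instead verify the local-boundedness ingredient directly for the two motivating regularizers. In Example~\ref{ex:quadratic}, $f_{a_0}(a) = \|a - a_0\|_{L^2(D)}^2$ is bounded on $B_\rho(a^\dag)$ by the triangle inequality. In Example~\ref{ex:KL}, the pointwise bracket $\underline{a} \leq a \leq \overline{a}$ imposed on elements of $\domain{F} \supset B_\rho(a^\dag)$ keeps the integrand $a \log(a_0/a) - (a_0 - a)$ uniformly bounded on the bounded domain $D$, so that $\KL{a_0}{a}$ is bounded on $B_\rho(a^\dag)$; a short convex-combination argument (write $a_n = \lambda b_n^\lambda + (1-\lambda) a^\dag$ with $b_n^\lambda \in B_\rho(a^\dag)$ for $n$ large, take $\limsup$ and send $\lambda \downarrow 0$) then supplies the required upper bound without appealing to the general theorem.
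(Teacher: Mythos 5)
Your argument is correct and follows essentially the same route as the paper: strong convergence $a_n \to a^\dag$ from \eqref{p3}, combined with the classical Ekeland--Temam fact that a proper, convex, lower semi-continuous functional is (locally Lipschitz) continuous on the interior of its effective domain, which is exactly the \cite[Corollary 2.5]{EkeTem76} step the paper invokes. Your version merely unpacks that citation (Baire category on sub-level sets, the $\liminf$/$\limsup$ split, and the direct verification for the two example regularizers) and is, if anything, slightly more careful in locating $a^\dag$ in the interior of $\domain{f_{a_0}}$ rather than of $\domain{F}$.
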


%%%%%%%%%%%%%%%%%%%%%%%%%%%%%%% PROOF
\begin{proof}
Since $\mathcal{D}(F)$ is convex and closed, the convexity and
weak lower semi-continuity of $f_{a_0}$ implies that $f_{a_0}$ is
lower semi-continuous \cite[Corolary 2.2]{EkeTem76}. 
Assumption~\ref{p3} implies that $a_n \rightarrow a$ as $n \rightarrow\infty$.  
Now, \cite[Corolary 2.5]{EkeTem76} implies that $f_{a_0}$
is continuous in the interior of $\mathcal{D}(F)$.
\end{proof}
%%%%%%%%%%%%%%%%%%%%%%%%%%%%% END PROOF

The next theorem ensures that the minimizing solutions of
Equation~\eqref{eq:discrete-functional} are also regularized solutions of
Equation~\eqref{eq:operator}, given a suitable choice for the regularization
parameter when $m,n \rightarrow \infty$. We remark that in
\cite{PRS2010} a similar result was obtained. However they need to
assume stronger conditions. One of the main differences between our
proof and that in \cite{PRS2010} is the validity of the tangential
cone condition in our case. Such condition is reviewed in Theorem~\ref{pr:cone-cond} of Appendix~\ref{sec:2}.
%
%%%%%%%%%%%%%%%%%%%%%%%%%%%%% THEOREM
\begin{theo}
Let \eqref{eq:noise-know}, \eqref{p3} and the assumptions on
Lemma~\ref{lamma:aux} be satisfied. Moreover, let $\beta =
\beta(\rho_m,\gamma_n,\delta)$ be such that
$$\beta \rightarrow 0\,, \qquad  \frac{(\max(\rho_m, \gamma_n,
\delta))^2}{\beta}\rightarrow 0\,,\qquad \mbox{as} \,\, \delta
\rightarrow 0\,\, \mbox{and }\,\, m, n\rightarrow \infty\,.$$

Then, every sequence $\{a_k\}_{k\in \N}$, with $a_k: =
a_{m_k,n_k}^{\beta_k,\delta_k}$ and $\beta_k: =
\beta(\rho_{m_k},\gamma_{n_k}, \delta_k)$ such that $\delta_k
\rightarrow 0$, $m_k\rightarrow \infty$, $n_k \rightarrow \infty$ as
$k\rightarrow \infty$ and $a_k$ is a correspondent minimizer of
\eqref{eq:discrete-functional}, has a weakly convergent subsequence
$\{a_l\}_{l\in \N}$ to an $f_{a_0}$-minimizing solution $a^\dag$ of
\eqref{eq:operator} and for which $\{f_{a_0}(a_l)\}_{l\in \N}$
converges to $f_{a_0}(a^\dag)$. Moreover, if $a^\dag$ is the unique
solution of \eqref{eq:operator}, then the entire sequence $\{a_k\}$
 converges weakly to $a^\dag$.
\label{th:convergence1}
\end{theo}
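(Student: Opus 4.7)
The strategy is the classical Tikhonov-regularization convergence argument, adapted to the triple asymptotics $(\rho_m,\gamma_n,\delta)\to 0$. The key device is a carefully chosen competitor $a_{n_k}\in\mathcal{D}_{n_k}$ that discretely approximates an $f_{a_0}$-minimizing solution $a^\dag$, exploiting the standing assumption (from Lemma~\ref{lamma:aux}) that $a^\dag$ lies in the interior of $\mathcal{D}(F)$ so that $a_{n_k}\to a^\dag$ strongly in $\X$ (by \eqref{p3}) and $f_{a_0}(a_{n_k})\to f_{a_0}(a^\dag)$.

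First I would invoke the minimality of $a_k$ against $a_{n_k}$ to obtain
$$\|F_{m_k}(a_k)-u^{\delta_k}\|_{\Y}^2+\beta_k f_{a_0}(a_k)\ \le\ \|F_{m_k}(a_{n_k})-u^{\delta_k}\|_{\Y}^2+\beta_k f_{a_0}(a_{n_k}).$$
The right-hand side I would bound by a triangle inequality splitting $F_{m_k}(a_{n_k})-u^{\delta_k}$ into $F_{m_k}(a_{n_k})-F(a_{n_k})$, $F(a_{n_k})-F(a^\dag)$, $F(a^\dag)-\widetilde{u}$, $\widetilde{u}-u^{\delta_k}$, giving terms controlled respectively by $\rho_{m_k}$, a vanishing term from continuity of $F$ at $a^\dag$ (using $\|a_{n_k}-a^\dag\|_{\X}\le\gamma_{n_k}$), zero (since $F(a^\dag)=\widetilde{u}$), and $\delta_k$. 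Combined with the hypothesis $(\max(\rho_{m_k},\gamma_{n_k},\delta_k))^2/\beta_k\to 0$, dividing by $\beta_k$ yields
$$\limsup_{k\to\infty} f_{a_0}(a_k)\ \le\ f_{a_0}(a^\dag),$$
and the same estimate shows $\|F_{m_k}(a_k)-u^{\delta_k}\|_{\Y}\to 0$.

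Next I would extract a weakly convergent subsequence $a_l\rightharpoonup\tilde a$ using the boundedness of $f_{a_0}(a_k)$ and Lemma~\ref{lema:preparation}(ii). To identify the limit as a solution of \eqref{eq:operator}, I combine $\|F_{m_l}(a_l)-u^{\delta_l}\|_{\Y}\to 0$ with $\|F(a_l)-F_{m_l}(a_l)\|_{\Y}\le\rho_{m_l}\to 0$ and $\|u^{\delta_l}-\widetilde{u}\|_{\Y}\le\delta_l\to 0$, obtaining $F(a_l)\to\widetilde{u}$ strongly. The weak sequential closedness of $F$ (cf.\ Lemma~\ref{lema:preparation}(i) and its proof) then forces $F(\tilde a)=\widetilde{u}$, so $\tilde a$ solves \eqref{eq:operator}. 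Weak lower semi-continuity of $f_{a_0}$ gives $f_{a_0}(\tilde a)\le\liminf_l f_{a_0}(a_l)\le\limsup_l f_{a_0}(a_l)\le f_{a_0}(a^\dag)$, and since $a^\dag$ is $f_{a_0}$-minimizing this chain of inequalities is an equality. Hence $\tilde a$ is itself an $f_{a_0}$-minimizing solution and $f_{a_0}(a_l)\to f_{a_0}(a^\dag)$. The uniqueness addendum follows by the standard subsequence-subsequence argument.

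The most delicate point is the comparison estimate in the first step: I need to keep careful track of how the three error scales $\rho_m,\gamma_n,\delta$ enter the residual bound, since they arise from independent mechanisms (operator discretization, domain discretization, and data noise). In particular, controlling $\|F(a_{n_k})-F(a^\dag)\|_{\Y}$ requires continuity of $F$ at $a^\dag$ along the strong convergence $a_{n_k}\to a^\dag$; this is where the interior assumption on $a^\dag$ feeds in (via Lemma~\ref{lamma:aux}) and allows me to avoid the stronger tangential cone condition used in \cite{PRS2010}, which is the conceptual improvement highlighted in the discussion preceding the theorem.
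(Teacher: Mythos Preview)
Your overall architecture matches the paper's proof: minimality of $a_k$ against the discrete competitor $a_{n_k}$, a triangle-inequality bound on the residual, the $\limsup$ estimate for $f_{a_0}$, weak compactness, weak lower semi-continuity, and identification of the limit via weak closedness of $F$. Your treatment of the limit identification is in fact more explicit than the paper's sketch.

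There is, however, a genuine gap in the comparison step. You bound $\|F(a_{n_k})-F(a^\dag)\|_{\Y}$ by invoking only ``continuity of $F$ at $a^\dag$'', calling it ``a vanishing term''. Mere continuity gives $o(1)$, but that is insufficient: after dividing by $\beta_k\to 0$ you need $\|F(a_{n_k})-F(a^\dag)\|_{\Y}^2/\beta_k\to 0$, and the only hypothesis linking $\beta_k$ to the domain discretization is $(\max(\rho_{m_k},\gamma_{n_k},\delta_k))^2/\beta_k\to 0$. You therefore need a quantitative estimate $\|F(a_{n_k})-F(a^\dag)\|_{\Y}\le C\gamma_{n_k}$, i.e., Lipschitz continuity of $F$ near $a^\dag$. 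The paper obtains exactly this rate by invoking the tangential cone condition (Theorem~\ref{pr:cone-cond}) together with the boundedness of $F'$ (Theorem~\ref{th:prop-F}); that is how estimate~\eqref{eq:10.1} is justified.

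You have also misread the discussion preceding the theorem. The paper does \emph{not} avoid the tangential cone condition; its proof explicitly uses it, and the claimed improvement over \cite{PRS2010} is that here the condition is a \emph{verified} property of the calibration operator (Theorem~\ref{pr:cone-cond}) rather than an unchecked abstract assumption. The interior hypothesis on $a^\dag$ from Lemma~\ref{lamma:aux} serves a different purpose---it guarantees $f_{a_0}(a_{n_k})\to f_{a_0}(a^\dag)$---and does not by itself control the $F$-residual. Once you replace ``continuity'' with ``Lipschitz continuity'' and cite Theorem~\ref{th:prop-F} item~\eqref{item3} (or the tangential cone condition), your argument goes through and coincides with the paper's.
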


%%%%%%%%%%%%%%%%%%%%%%%%%%%%% END THEOREM
%%%%%%%%%%%%%%%%%%%%%%%%%%%%% PROOF
\begin{proof}(Sketch)
We make use of standard arguments in the minimization
theory of nonlinear Tikhonov regularization \cite{SchGraGroHalLen08}.

Let $a_{m,n}^{\beta, \delta}$ be a minimizer of
\eqref{eq:discrete-functional} (the existence is guaranteed by
Proposition~\ref{pr:existence}). From the definition of $a_{m,n}^{\beta,
\delta}$, it follows that
\begin{equation}
\begin{array}{rcl}
\norm{F_m(a_{m,n}^{\beta, \delta}) - \ud}^2 + \beta
f_{a_0}(a_{m,n}^{\beta, \delta}) & \leq & \norm{F_m(a_{n}) - \ud}^2 +
\beta f_{a_0}(a_{n})\\
& \leq &\left(\rho_m + \norm{F(a_n) - u} + \delta \right)^2 + \beta
f_{a_0}(a_{n})\,.\nonumber
\end{array}
\label{eq:10.0}
\end{equation}

From the continuity of $F$, the definition of $a_n$ and the tangential cone
condition (see Theorems~\ref{th:prop-F} and \ref{pr:cone-cond} in
the Appendix~\ref{sec:2}), we have
\begin{align}
\norm{F_m(a_{m,n}^{\beta, \delta}) - \ud}^2  + \beta
f_{a_0}(a_{m,n}^{\beta, \delta})  \leq C (\max(\rho_m, \gamma_n,
\delta))^2 + \beta f_{a_0}(a_{n})\,.
\label{eq:10.1}
\end{align}
Therefore,
\begin{align}
f_{a_0}(a_{m,n}^{\beta, \delta}) \leq  C \frac{(\max(\rho_m,
\gamma_n, \delta))^2}{\beta} +  f_{a_0}(a_{n})\,.
\label{eq:10.2}
\end{align}
Lemma~\ref{lamma:aux} and the assumptions on $\beta$ guarantee that
\begin{align}
\displaystyle\limsup_{\stackrel{\delta \rightarrow 0}{m,n\rightarrow \infty}} f_{a_0}(a_{m,n}^{\beta, \delta}) \leq f_{a_0}(a^\dag)\quad
\mbox{and} \quad \displaystyle\lim_{\stackrel{\delta \rightarrow 0}{m,n\rightarrow \infty}} \norm{F_m(a_{m,n}^{\beta, \delta}) - u} =
0\,.
\label{eq:10.3}
\end{align}
Let now $\beta_k: = \beta(\rho_{m_k}, \gamma_{n_k},\delta_k )$ and
$a_k: = a_{m_k,n_k}^{\beta_k, \delta_k}$. Since $\{\F_{\alpha_k,
u^{\delta_k}}^{m_k,n_k}(a_k)\}$ is bounded, the coercivity of
$f_{a_0}$ implies that $\{\norm{a_k}\}$ is bounded. From
Lemma~\ref{lema:preparation}, there exists a subsequence
$\{a_l\}_{l\in \N}$ weakly convergent to some $\tilde{a} \in
\mathcal{D}_n(F)$. Due to the weak lower semi-continuity of
$f_{a_0}$ and \eqref{eq:10.3}, we get that
\begin{align*}
f_{a_0}(\tilde{a}) \leq \liminf_{l \rightarrow \infty}  f_{a_0}(a_l)
\leq  \limsup_{l \rightarrow \infty} f_{a_0}(a_l) \leq
f_{a_0}(a^\dag)\,.
\end{align*}
Continuity of $F$ implies that  $F(\tilde{a}) = u$. Therefore, 
$\tilde{a}$ is an $f_{a_0}$-minimizing solution of
\eqref{eq:operator}.
\end{proof}
%%%%%%%%%%%%%%%%%%%%%%%%%%%%% END PROOF

%--------------------------------------------------------------------------------------
\subsection{Convergence Rates}

We now state results concerning the convergence rates for the regularized
 solutions in the discrete setting. As in the
continuous version presented in Section~\ref{sec:convex},
we need some \textit{a priori} knowledge about the true
solution, which leads us to impose some source condition. Thus, we make use of
the same source condition used in the continuous case, which is
given by \eqref{eq:app-souce-cond}. Therefore, we are ready to prove the following theorem:
%%%%%%%%%%%%%%%%%%%%%%%%%%%%% THEOREM
\begin{theo}
Suppose that $f_{a_0}$ satisfies the
Definition~\ref{def:q-coercivity-bregman}. Let \eqref{eq:noise-know}
and the source  condition \eqref{eq:app-souce-cond} be satisfied. If
$\beta \sim \max(\rho_m,\gamma_n, \delta)$ and $\beta \beta_2 < 1$,
then
\begin{align}\label{eq:conv-rates1}
D_{\zeta^\dag}(a_{m,n}^{\beta, \delta}, a^\dag) =
\mathcal{O}(\max(\rho_m,\gamma_n, \delta))\,.
\end{align}
\label{th:convergence-rates1}
\end{theo}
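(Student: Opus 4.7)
My approach parallels the continuous convergence rate proof from \cite[Theorem~12]{DeCezaroScherzerZubelli09} and \cite[Lemma~16]{DeCezaroScherzerZubelli09}, but now tracks the three error sources $\rho_m$, $\gamma_n$, $\delta$ separately. Writing $a := a_{m,n}^{\beta,\delta}$ for brevity, I would start from the minimality inequality against the competitor $a_n$ from \eqref{p3} that was already derived inside the proof of Theorem~\ref{th:convergence1}:
\begin{equation*}
\norm{F_m(a) - u^\delta}_{\Y}^{2} + \beta f_{a_0}(a) \;\leq\; C\,(\max(\rho_m,\gamma_n,\delta))^{2} + \beta f_{a_0}(a_n).
\end{equation*}
Subtracting $\beta f_{a_0}(a^\dag)$ from both sides and using Definition~\ref{df:subdiff} to rewrite $f_{a_0}(a) - f_{a_0}(a^\dag) = D_{\zeta^\dag}(a,a^\dag) + \langle \zeta^\dag, a - a^\dag\rangle$ (and similarly for $a_n$), I get a bound
\begin{equation*}
\norm{F_m(a) - u^\delta}_{\Y}^{2} + \beta D_{\zeta^\dag}(a,a^\dag) \;\leq\; C\,(\max(\rho_m,\gamma_n,\delta))^{2} + \beta D_{\zeta^\dag}(a_n,a^\dag) + \beta\,\langle \zeta^\dag, a_n - a\rangle.
\end{equation*}
A quantitative version of Lemma~\ref{lamma:aux} gives $D_{\zeta^\dag}(a_n,a^\dag) = \mathcal{O}(\gamma_n)$ via the local regularity of $f_{a_0}$ at the interior point $a^\dag$ together with the bound $\norm{a_n - a^\dag}_{\X} \leq \gamma_n$.

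The next step is to plug in the source condition \eqref{eq:app-souce-cond} to estimate the linear term
\begin{equation*}
|\langle \zeta^\dag, a_n - a\rangle| \;\leq\; \norm{w}_{\Y^{*}}\,\norm{F'(a^\dag)(a - a_n)}_{\Y} + \norm{r}_{\X^{*}}\,\norm{a - a_n}_{\X}.
\end{equation*}
By the tangential cone condition (Theorem~\ref{pr:cone-cond} in Appendix~\ref{sec:2}), $\norm{F'(a^\dag)(a - a^\dag)}_{\Y} \leq C\,\norm{F(a) - F(a^\dag)}_{\Y}$, and then by \eqref{p3} and \eqref{eq:noise-know},
\begin{equation*}
\norm{F(a) - F(a^\dag)}_{\Y} \;\leq\; \norm{F_m(a) - u^\delta}_{\Y} + \rho_m + \delta.
\end{equation*}
For the $r$-piece I invoke the $q$-coercivity of $D_{\zeta^\dag}$ (Definition~\ref{def:q-coercivity-bregman}) to get $\norm{a - a^\dag}_{\X} \leq (\underline{c}^{-1} D_{\zeta^\dag}(a,a^\dag))^{1/q}$, and use Young's inequality to linearise in $D_{\zeta^\dag}(a,a^\dag)$.

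Combining these estimates, the right-hand side splits into three kinds of contributions: (i) a piece proportional to $\norm{F_m(a) - u^\delta}_{\Y}$, which via $2xy \leq \tfrac{1}{2} x^{2} + 2 y^{2}$ is absorbed into the discrepancy on the left; (ii) a piece proportional to $\beta(\rho_m + \delta)$, absorbed into the data budget; and (iii) a piece proportional to $\beta\,\beta_2\, D_{\zeta^\dag}(a,a^\dag)$, which, thanks to the hypothesis $\beta\beta_2 < 1$, can be moved to the left-hand side. After this absorption and division by $\beta$, I arrive at
\begin{equation*}
D_{\zeta^\dag}(a,a^\dag) \;\leq\; C\,\frac{(\max(\rho_m,\gamma_n,\delta))^{2}}{\beta} + C'\,(\rho_m + \delta) + \mathcal{O}(\gamma_n),
\end{equation*}
and the balanced choice $\beta \sim \max(\rho_m,\gamma_n,\delta)$ yields the claimed rate $\mathcal{O}(\max(\rho_m,\gamma_n,\delta))$.

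\textbf{Main obstacle.} The delicate step is propagating the tangential cone condition through the discretised operator $F_m$: Theorem~\ref{pr:cone-cond} is stated for $F$, so I must either invoke the continuous cone inequality at $a^\dag$ and pay the extra $\rho_m$ cost in moving from $F_m(a)$ to $F(a)$, or establish a uniform-in-$m$ discrete version. The second subtlety is ensuring that the $r$-remainder in \eqref{eq:app-souce-cond} combined with the $q$-coercivity does not produce a super-linear loss: the constant $\beta_2$ in the hypothesis must be identified as (roughly) $\underline{c}^{-1/q}\norm{r}_{\X^{*}}$ up to the Young constant, and the absorption into the Bregman term requires $\beta\beta_2 < 1$ uniformly along the sequence, which is compatible with $\beta \to 0$. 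Apart from this, the rest is bookkeeping.
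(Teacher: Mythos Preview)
Your argument is correct and runs on the same rails as the paper's: start from the minimality estimate \eqref{eq:10.1}, convert the penalty difference into a Bregman distance plus a duality pairing, and then control the pairing via the source condition \eqref{eq:app-souce-cond}, the tangential cone condition (applied to $F$, with the $\rho_m$ correction when passing from $F_m$), and absorption into the left-hand discrepancy and Bregman terms. The one organisational difference is in the decomposition. The paper introduces an auxiliary subgradient $\zeta^n\in\partial f_{a_0}(a_n)$, writes $D_{\zeta^n}(a_{m,n}^{\beta,\delta},a_n)$ on the left, and then invokes the three-point Bregman identity of \cite[Lemma~1.3.9]{ButIusem2000} to split $\langle\zeta^n,\,\cdot\,\rangle$ into the four pieces $(A)$--$(D)$; estimates for $(C)$ and $(D)$ use boundedness of the subgradients near $a^\dag$ and the Lipschitz bound on $F'(a^\dag)$. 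You bypass $\zeta^n$ altogether and work directly with $\zeta^\dag$, which produces instead the single extra term $D_{\zeta^\dag}(a_n,a^\dag)=\mathcal{O}(\gamma_n)$ that you dispatch via the local Lipschitz continuity of $f_{a_0}$ at the interior point $a^\dag$ (the quantitative upgrade of Lemma~\ref{lamma:aux}). Your route is somewhat more direct; the paper's is more modular in that it recycles the packaged inequality $|\langle\zeta^\dag,a-a^\dag\rangle|\le\beta_1 D_{\zeta^\dag}(a,a^\dag)+\beta_2\norm{F(a)-F(a^\dag)}$ already established for the continuous theory in Section~\ref{sec:att-sc}. One small point: in the paper's notation $\beta_2$ is the coefficient in front of $\norm{F(a)-F(a^\dag)}$ in that packaged inequality, not the constant attached to the $r$-remainder as you suggest; the $r$-contribution is folded into $\beta_1$.
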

%%%%%%%%%%%%%%%%%%%%%%%%%%%%% END THEOREM
%%%%%%%%%%%%%%%%%%%%%%%%%%%%% PROOF
\begin{proof}
Let $\{a_n\}_{n\in \N}$ be as in Theorem~\ref{th:convergence1}. From
the definition of $a_{m,n}^{\beta, \delta}$ and estimate analogous
to \eqref{eq:10.0}, it follows that
\begin{align}\label{eq:2.6-1}
\norm{F_m(a_{m,n}^{\beta, \delta}) - \ud}^2 + \beta
f_{a_0}(a_{m,n}^{\beta, \delta}) \leq C (\max(\rho_m, \gamma_n,
\delta))^2 + \beta f_{a_0}(a_n)\,.
\end{align}
Now, using \eqref{eq:2.6-1} and the definition of the Bregman
distance we have that
\begin{align}\label{eq:2.6-2}
\norm{F_m(a_{m,n}^{\beta, \delta}) - \ud}^2 & + \beta
D_{\zeta^n}(a_{m,n}^{\beta, \delta}, a_n) \nonumber\\
& =
\norm{F_m(a_{m,n}^{\beta, \delta}) - \ud}^2 + \beta \left( f_{a_0}(a_{m,n}^{\beta, \delta}) - f_{a_0}(a_n) - \langle \zeta^n, a_{m,n}^{\beta, \delta} - a_n \rangle \right)\nonumber\qquad\\
& =
\norm{F_m(a_{m,n}^{\beta, \delta}) - \ud}^2 + \beta f_{a_0}(a_{m,n}^{\beta, \delta}) - \beta \left(f_{a_0}(a_n) + \langle \zeta^k, a_{m,n}^{\beta, \delta} - a_n \rangle \right)\nonumber\\
& \leq
C (\max{\rho_m, \gamma_n, \delta})^2 -  \beta  \langle \zeta^k, a_{m,n}^{\beta, \delta} - a_n \rangle \\
& =
C (\max{\rho_m, \gamma_n, \delta})^2 -  \beta  \langle \zeta^k, a_{m,n}^{\beta, \delta} - a^\dag \rangle  - \beta \langle \zeta^k, a^\dag  - a_n\rangle\nonumber\\
& = C (\max(\rho_m, \gamma_n, \delta))^2
% (1)
\underbrace{-  \beta  \langle \zeta^k- \zeta^\dag, a_{m,n}^{\beta,
\delta} - a^\dag \rangle}_{(A)}
% (2)
\underbrace{ -  \beta  \langle \zeta^\dag, a_{m,n}^{\beta, \delta} - a^\dag \rangle}_{(B)}\nonumber\\
 % (3)
& \qquad \quad \qquad \underbrace{- \beta \langle \zeta^k -
\zeta^\dag, a^\dag  - a_n\rangle}_{(C)}
% (4)
\underbrace{- \beta \langle \zeta^\dag, a^\dag  -
a_n\rangle}_{(D)}\nonumber\,.
\end{align}

To conclude the proof, it remains to estimate the terms $(A)$, $(B)$, $(C)$ and $(D)$ 
in Equation~\eqref{eq:2.6-2} in terms of Bregman distance. At this point we
will strongly use the existence of the source condition
\eqref{eq:app-souce-cond}.

From \cite[Lemma 1.3.9 iv)]{ButIusem2000}, it follows that
\begin{align}\label{eq:(1)}
(A)\,\, = \, -\beta \left[D_{\zeta^\dag}( a_{m,n}^{\beta, \delta}
,a^\dag) + D_{\zeta^n}(a^\dag, a_n) - D_{\zeta^n}( a_{m,n}^{\beta,
\delta}, a_n)\right]\,.
\end{align}
From the convergence rates of Section~\ref{sec:att-sc}, it follows that
\begin{align}\label{eq:(2)}
(B)\,\, & \leq \beta |\langle \zeta^\dag, a_{m,n}^{\beta, \delta} -
a^\dag \rangle| \leq
\beta \beta_1 D_{\zeta^\dag}(a_{m,n}^{\beta, \delta}, a^\dag) +\beta \beta_2 \norm{F(a_{m,n}^{\beta, \delta}) -F(a^\dag)}\\
& \leq \beta \beta_1 D_{\zeta^\dag}(a_{m,n}^{\beta, \delta}, a^\dag)
+\beta \beta_2 \left(\norm{F(a_{m,n}^{\beta, \delta}) -
F_m(a_{m,n}^{\beta, \delta})} +\norm{F_m(a_{m,n}^{\beta, \delta})
-\ud} + \norm{\ud-F(a^\dag)}\right)\,.\nonumber
\end{align}
Using Cauchy-Schwartz inequality, and the result in Reference \cite[Proposition 1.1.7
ii)]{ButIusem2000}, it follows that
\begin{align}\label{eq:(3)}
(C)\,\, \leq \beta \norm{\zeta^n - \zeta^\dag}\norm{a_n - a^\dag}
\leq \beta C \norm{a_n - a^\dag}\,.
\end{align}
Finally, from the source representation
\eqref{eq:app-souce-cond} and the Lipschitz continuity of $F'(a^\dag)$, 
we have that
\begin{align}\label{eq:(4)}
(D)\,\, & = \beta \left(\langle r - \zeta^\dag, a^\dag  - a_n\rangle
+ \langle r, a_n - a^\dag \rangle\right) =
\beta \left(\langle F'(a^\dag)^* w, a^\dag  - a_n\rangle + \langle r, a_n - a^\dag \rangle\right)\nonumber\\
&  \leq \beta \left(\norm{w}\norm{F'(a^\dag)( a^\dag  - a_n)} +
\norm{ r}\norm{a_n - a^\dag} \right) \leq \beta \left(C\norm{w}+
\norm{ r}\right)\norm{a_n - a^\dag} \,.
\end{align}

Putting together \eqref{eq:2.6-2}, and the estimates $(A)$, $(B)$, $(C)$
and $(D)$, the assumptions on the parameters and the fact that
$D_{\zeta}(a,\tilde{a}) \geq 0$, we get the conclusion.
\end{proof}
%%%%%%%%%%%%%%%%%%%%%%%%%%%%% END PROOF

It is worth noticing that under different assumptions from the ones we have 
in Theorem~\ref{th:convergence-rates1}, a similar result was
proved in \cite{PRS2010}. However, we have
weaker assumptions than those in \cite{PRS2010}. Moreover, all the
assumptions are verified for the calibration problem under consideration.

\begin{remark}
The estimate~\eqref{eq:conv-rates1} accounts separately for the
contributions of different sources of uncertainties. It improves the
error estimates when performing numerical tests, making more
reliable the solutions obtained by these techniques.

In particular, the estimate~\eqref{eq:conv-rates1} implies an upper
bound for the discretization level of $\mathcal{D}(F)$ w.r.t. the 
quality of the observation and the quantity of data. In other words, a 
discrepancy principle is obtained (as a function of the
discretization level in $\mathcal{D}(F)$) at the first iteration for which 
$\rho_m \leq \max\{\gamma_n, \delta\}$. This discrepancy principle will be verified numerically in Section~\ref{sec:numerical}. A rigorous proof is the subject of \cite{ADZ2013}.
\end{remark}

%-----------------------------------------------------------------------------------------------
\section{Numerical Results}\label{sec:numerical}
We numerically solve the minimization problem in Equation~(\ref{eq:discrete-functional}) connected to Tikhonov regularization of the inverse problem of local volatility calibration. It is done by a gradient method. See \cite{vvlathesis,vvla2}. 
The direct problem of Equation~(\ref{1}), as well as, its related adjoint, arising in the calculation of the Tikhonov's functional gradient, are numerically solved by a Crank-Nicholson scheme. See \cite{vvlathesis,vvla2,DCZ2010,Wilmott95}.
Some of the theoretical statements of past sections, such as convergence analysis and rates as well as the proposed discrepancy principle in the choice of the discretization level, are now illustrated. 
We also present some reconstructions of the local volatility surface from traded option data.

%------------------------------------------------------------------------------------------------
\subsection{Synthetic Examples}\label{sec:synthetic}

We generate the synthetic data of option prices as follows:
We first choose a very fine mesh and calculate the option prices by the numerical solution of Problem~(\ref{1}) with the local volatility given in Equation~(\ref{vol}). We apply to the resulting prices a zero-mean additive Gaussian white noise with variance  $1\%$ of the maximum value of such option prices. Then, we choose the resulting perturbed data in a coarser grid contained in the first one. This is done in order to avoid the so-called inverse crime \cite{somersalo}.

The local volatility surface used in the present set of numerical experiments is defined by:
\begin{equation}
\sigma(\tau,y) = \left\{
\begin{array}{ll}
\displaystyle\frac{2}{5}-\frac{4}{25}\text{e}^{-\tau/2}\cos\left(\displaystyle\frac{4\pi y}{5}\right),& \text{ if } -2/5 \leq y \leq 2/5\\
\\
2/5,& \text{ otherwise.}
\end{array} \right.
\label{vol}
\end{equation}
Recall that the diffusion parameter of Problem~\ref{1} is $a = \sigma^2/2$.
Note also that, Problem~(\ref{1}) is numerically solved in the domain $D = [0,1]\times[-5,5]$.

We choose the standard two-dimensional linear basis for the domain of the forward operator, since it is smooth. Such smoothness is a feature favored by practitioners \cite{DupPVT}. 
In order to find its appropriate level of discretization, we also assume that the uncertainty levels $\delta$ and $\rho_m$ are given and proceed as follows:
\begin{enumerate}
 \item Chose different meshes that correspond to different levels of discretization of $\mathcal{D}(F)$ with the spline basis. We require that the coarser mesh is contained in the finner one.
 \item For each mesh, solve the inverse problem with the parameter of regularization $\beta$ chosen by the related Morozov's discrepancy principle. See \cite{vvla2,Mor84}.
 \item Choose the coarser grid which satisfies the discrepancy criterion:
\begin{equation}
\tau_1 \max(\delta,\rho_m) \leq \|F_m(a^{\beta,\delta}_{m,n}) - u^\delta\| \leq \tau_2 \max(\delta,\rho_m),
\label{discrepancy} 
\end{equation}
where, $m$ and $\delta$ are fixed and $\tau_1$ and $\tau_2$ are given constants.
\end{enumerate}
The intuitive justification of the above procedure is that:\\
{\em If the level of discretization is too small, the reconstructed local volatility does not capture the variability of the original one. On the other hand, if we choose a level of discretization finer than necessary, we start to reproduce noise.}\\
Such a conclusion can be partially motivated by the convergence rates of Theorem~\ref{th:convergence-rates1}.

For the present examples, we chose $\tau_1 = 1.05$ and $\tau_2 = 1.5$. Furthermore, we consider that $\rho_m = 0$, since we use a fine mesh in the numerical solution of Problem~(\ref{1}). Figure~\ref{test1} illustrates the above mentioned discrepancy principle to find the appropriate level of discretization for $\domain{F}$.

The data was generated using the step sizes, $\Delta t = 0.0025$ and $\Delta y = 0.01$. After adding noise, we collected the data in a mesh with step sizes $\Delta t = 0.02$ and $\Delta y = 0.1$. 
The direct problem in Equation~(\ref{1}) and its adjoint were numerically solved with $\Delta t = 0.02$ and $\Delta y = 0.1$. 
The reconstruction of Figures~\ref{test1} and \ref{test2} were calculated with meshes in $\domain{F}$ with the following step sizes: $$
\Delta t = 0.1, 0.08, 0.07, 0.06, 0.05, 0.04, 0.03, 0.02, 0.01, 0.0075, 0.005, 0.0025
$$
and
$$
\Delta y = 0.1, 0.08, 0.07, 0.06, 0.05, 0.04, 0.03, 0.02, 0.01, 0.0075, 0.005, 0.0025.
$$

\begin{figure}[ht]
\centering
\includegraphics[width=0.5\textwidth]{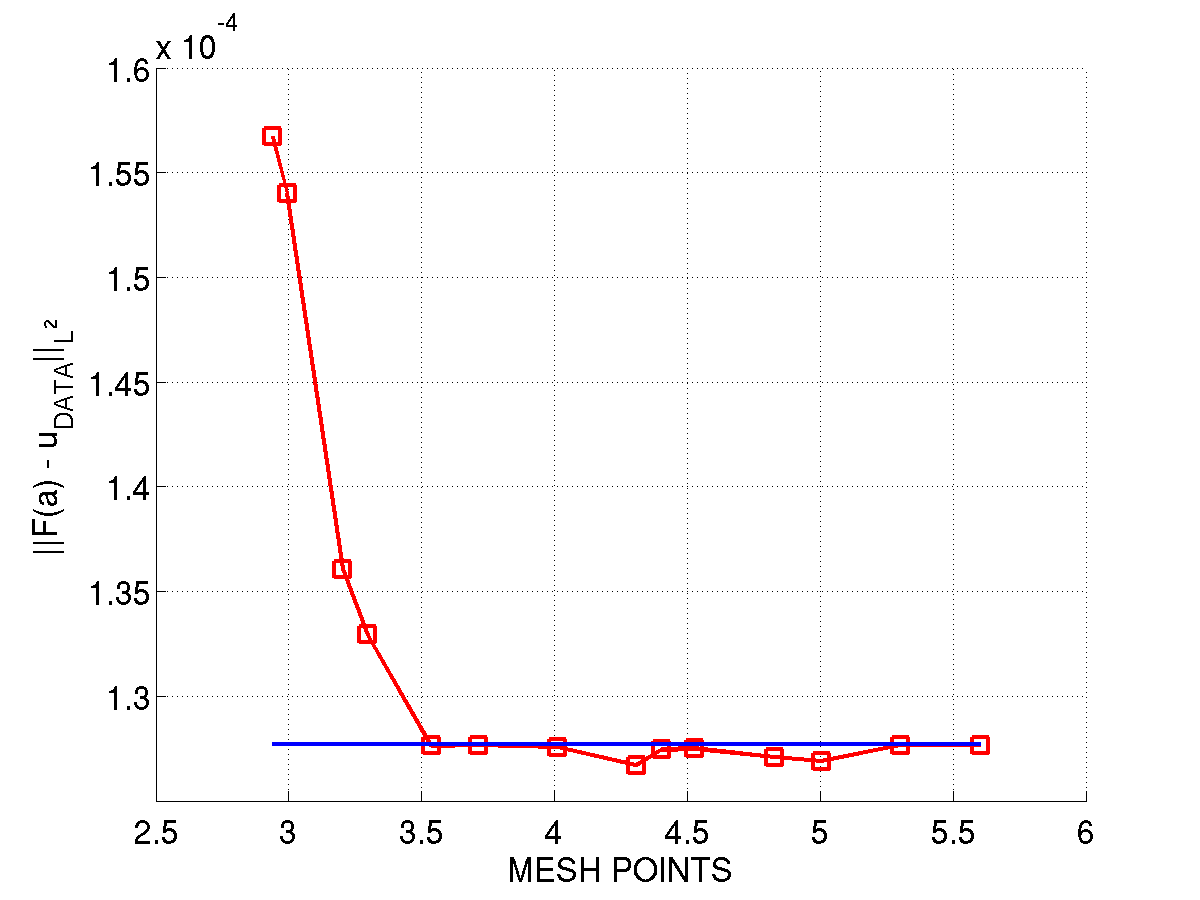}
\caption{Evolution of the residual as a function of the number of mesh points. A discrepancy principle was used to find the appropriate level of discretization of $\mathcal{D}(F)$. In the presence of noise, its minimum is attained for a coarser mesh.The horizontal line corresponds to $\tau_1\delta$.}
\label{test1}
\noindent %\hrulefill
\end{figure}

\begin{figure}[ht]
\centering
\includegraphics[width=0.5\textwidth]{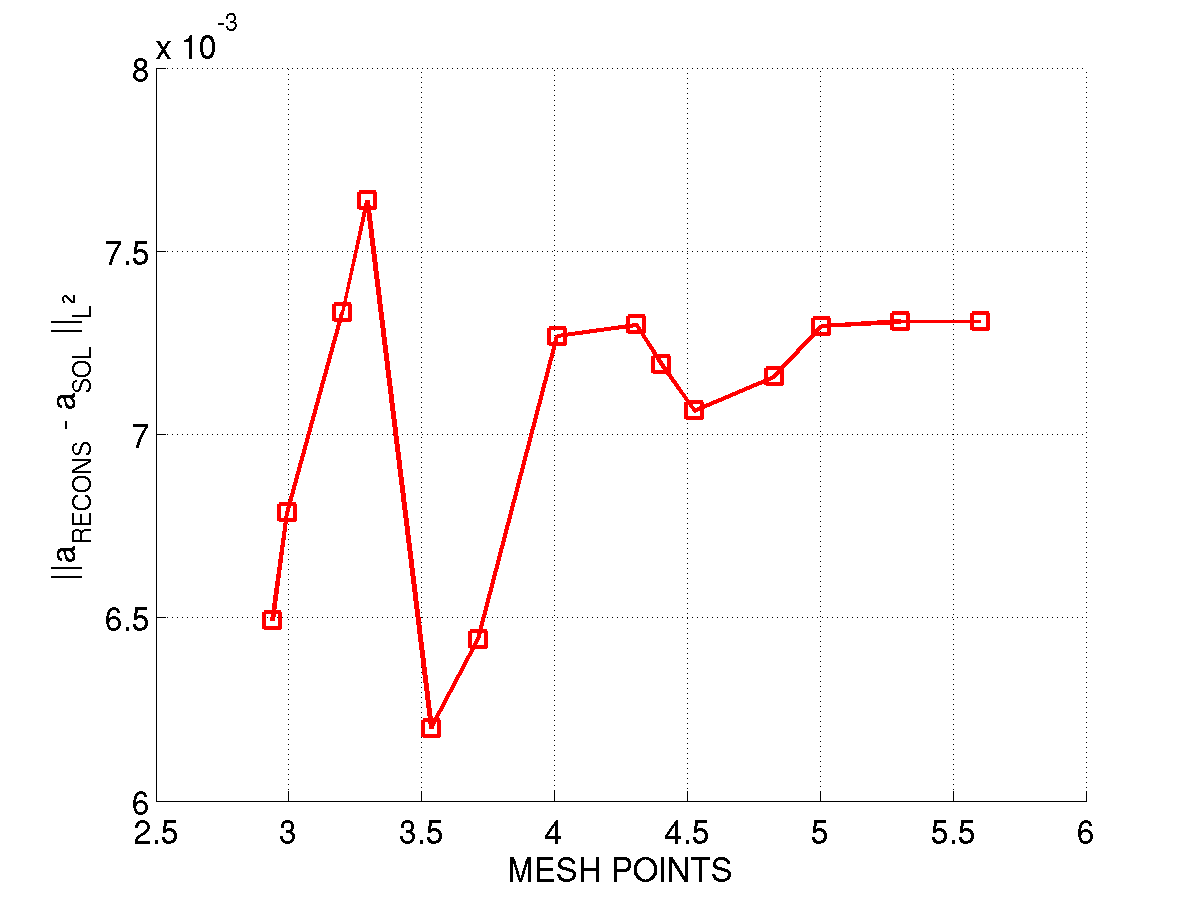}
\caption{Evolution of the $L^2$-error. In the presence of noise, its minimum is attained for a coarser mesh.}
\label{test2}
\noindent %\hrulefill
\end{figure}

\begin{figure}[ht]
\centering
\includegraphics[width=0.33\textwidth]{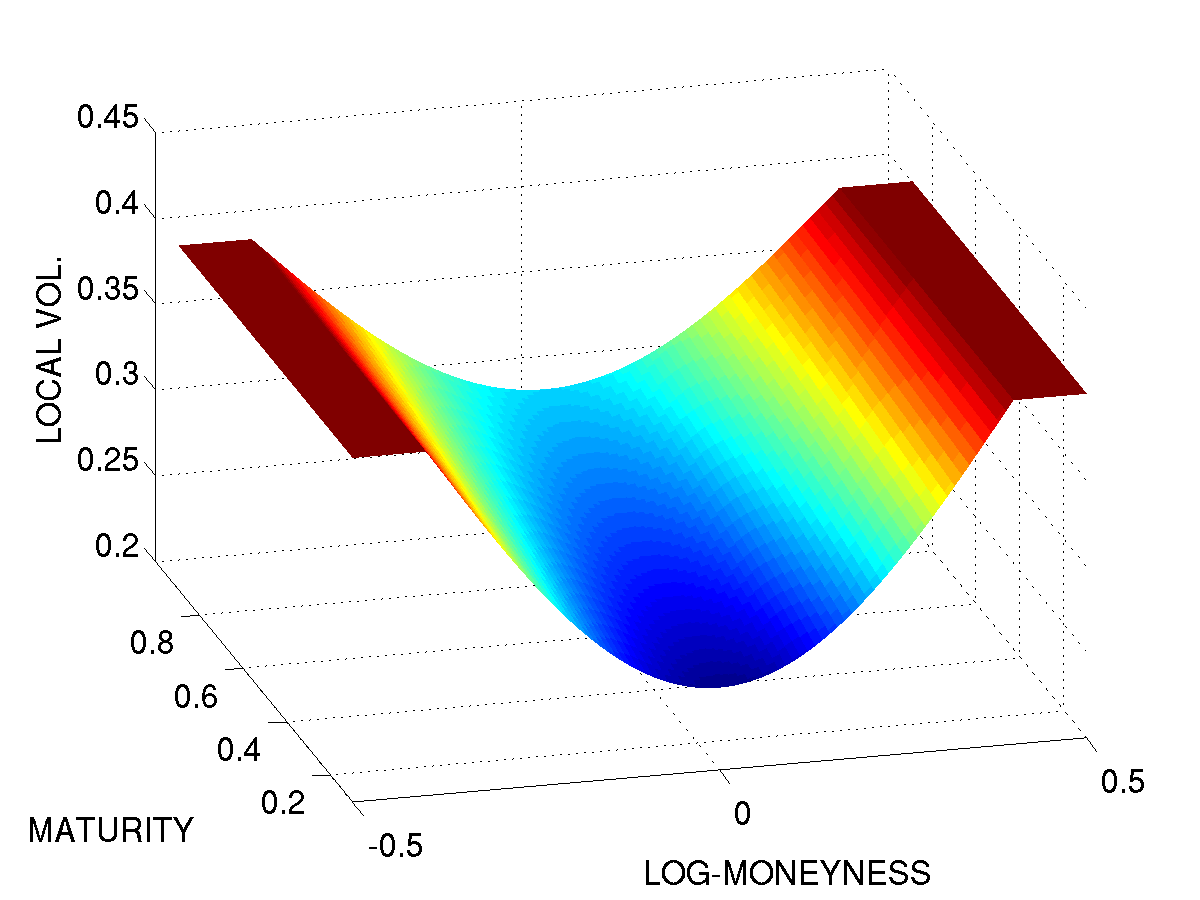}\hfill 
\includegraphics[width=0.33\textwidth]{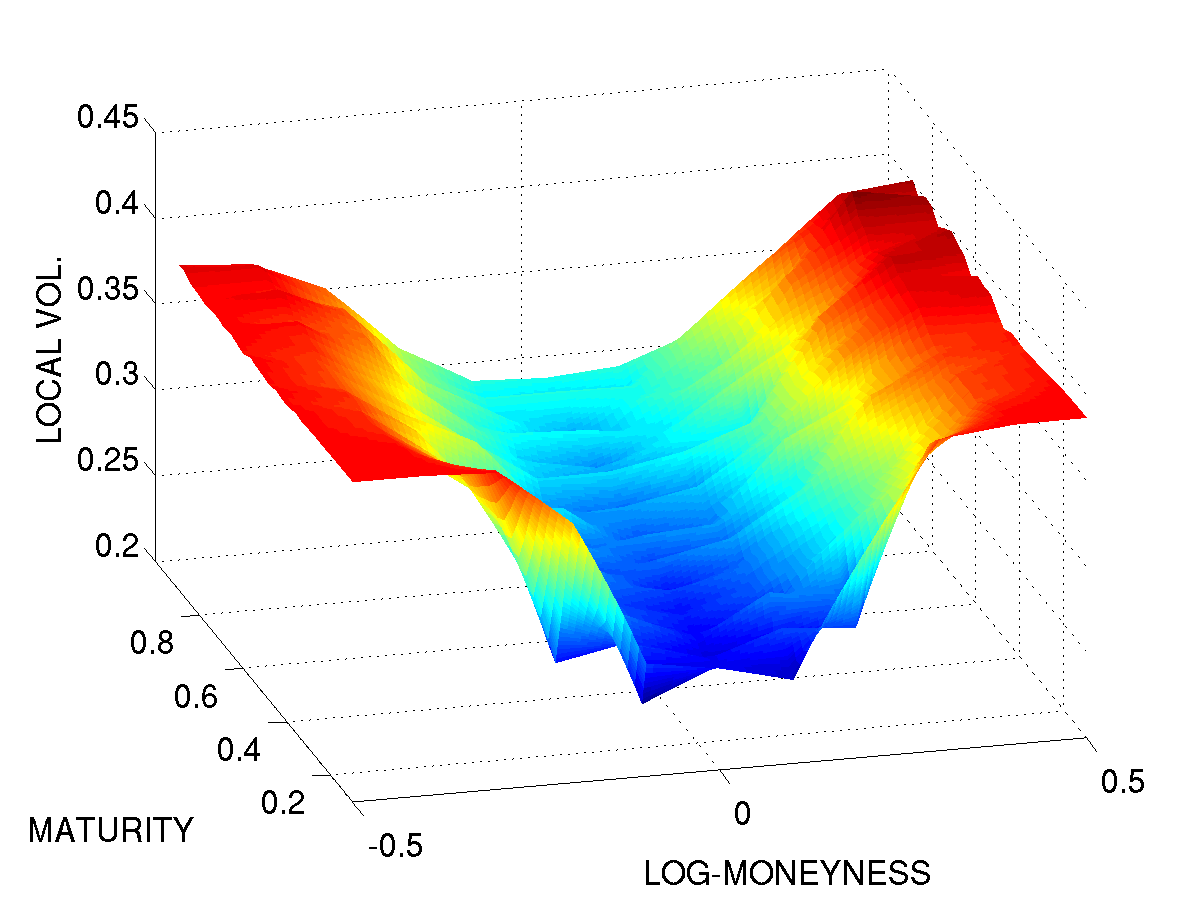}\hfill
\includegraphics[width=0.33\textwidth]{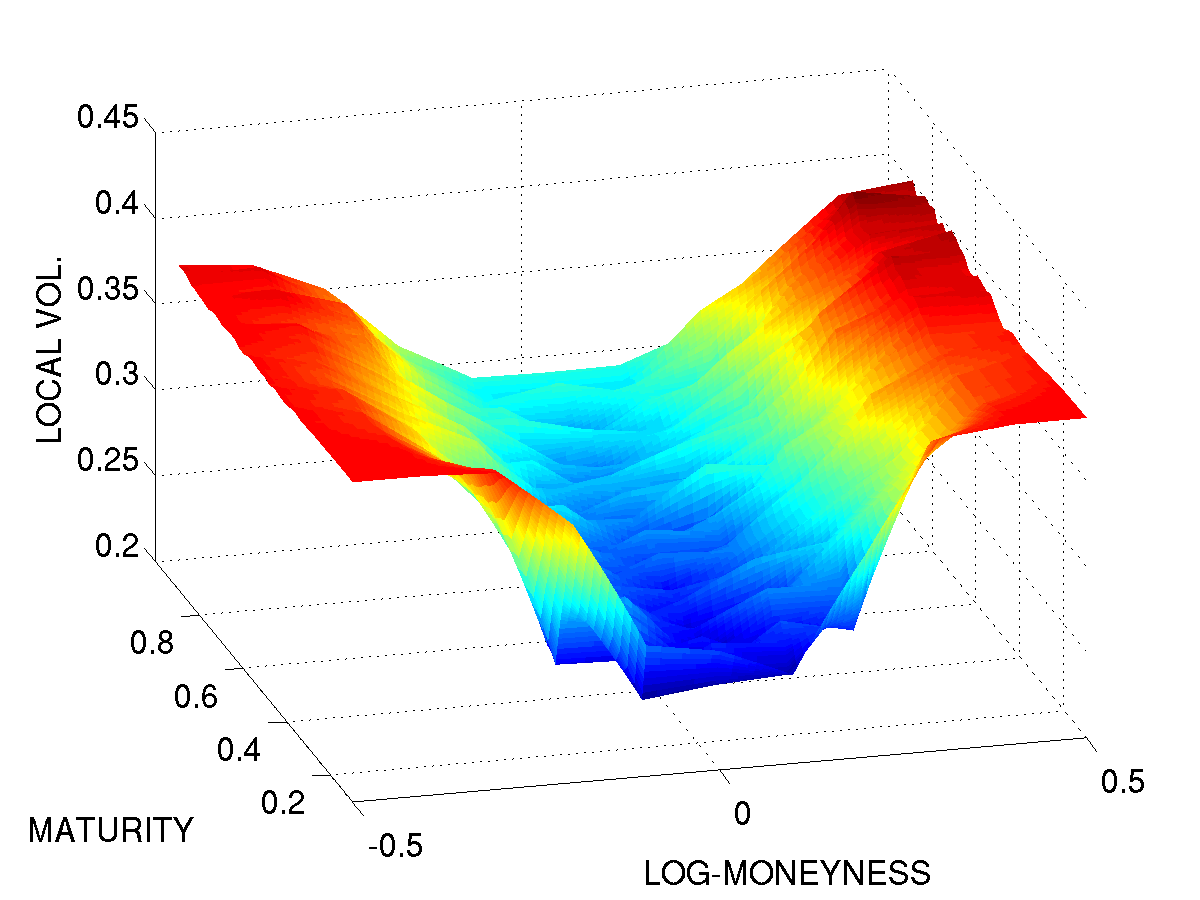}
\caption{Left: original surface. Center and right: reconstructions corresponding to the first and second points satisfying the discrepancy principle of Figure~\ref{test1}, respectively.}
\label{test3}
\noindent %\hrulefill
\end{figure}

\begin{figure}[ht]
\centering
\includegraphics[width=0.33\textwidth]{original}\hfill 
\includegraphics[width=0.33\textwidth]{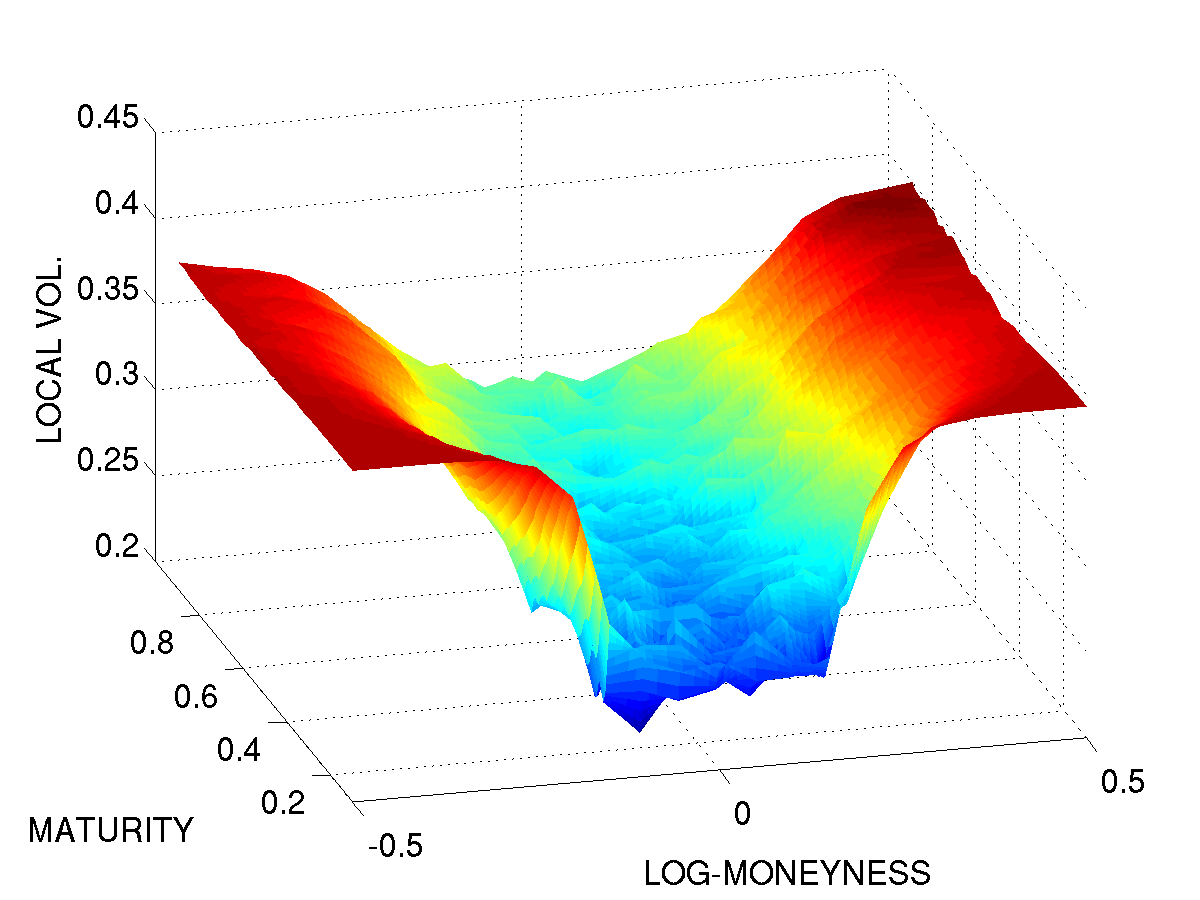}\hfill
\includegraphics[width=0.33\textwidth]{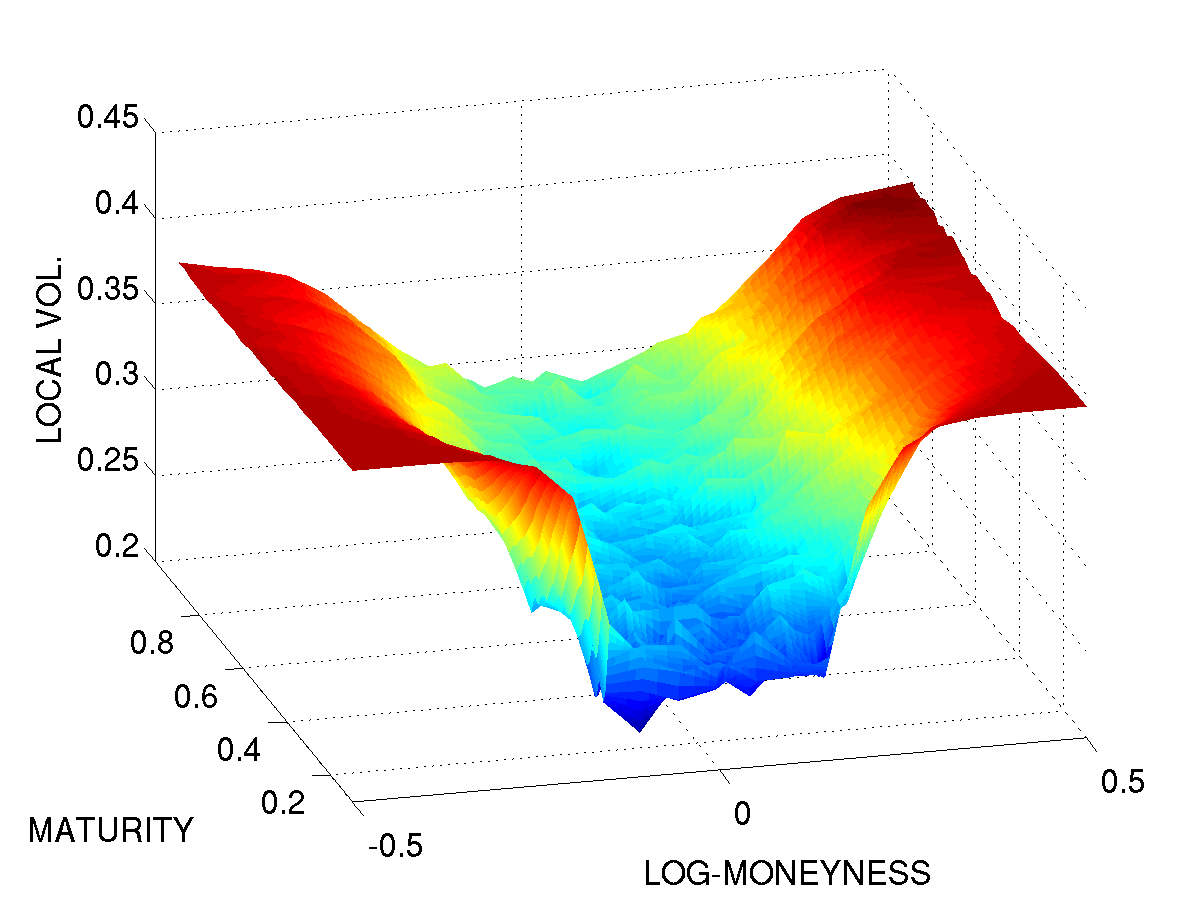}
\caption{Left: original surface. Center and right: reconstructions satisfying the discrepancy principle of Figure~\ref{test1}.}
\label{test4}
\noindent %\hrulefill
\end{figure}

%Figures~\ref{test1} and \ref{test2} presents the . 
We also calculate the $L^2$-error, i.e., the $L^2(D)$ distance between the regularized solution and the original local volatility surface. The resulting $L^2$-error for the regularized solutions used in Figure~\ref{test1} can be found in Figure~\ref{test2}. Note that, as expected, the  first two reconstructions satisfying the discrepancy principle above minimize the $L^2$-error, as we can see in Figures~\ref{test1} and \ref{test2}. It is an empirical evidence that the discrepancy principle in Equation~(\ref{discrepancy}) is a reliable way of finding the appropriate level of discretization of $\domain{F}$.

We used the standard regularizing functional
$$
f_{a_0}(a) = \|a - a_0\|^2_{H^1(D)},
$$
since it imposes smoothness in both time and space variables. As mentioned above, smoothness is an expected feature of local volatility. We also performed some tests using Kullback-Leibler regularization.

Figures~\ref{test3} and \ref{test4} present reconstructions of local volatility satisfying the discrepancy principle. Note that the reconstructions that first satisfy the discrepancy principle of Figure~\ref{test1} present better $L^2$-errors (as expected). Moreover, we can see that the surfaces of Figure~\ref{test3}, as it has a smaller number of unknowns, is smoother than the surfaces of Figure~\ref{test4}.%, whereby the larger time to maturity volatilities are flatter. 

\begin{figure}[ht]
\centering
\includegraphics[width=0.5\textwidth]{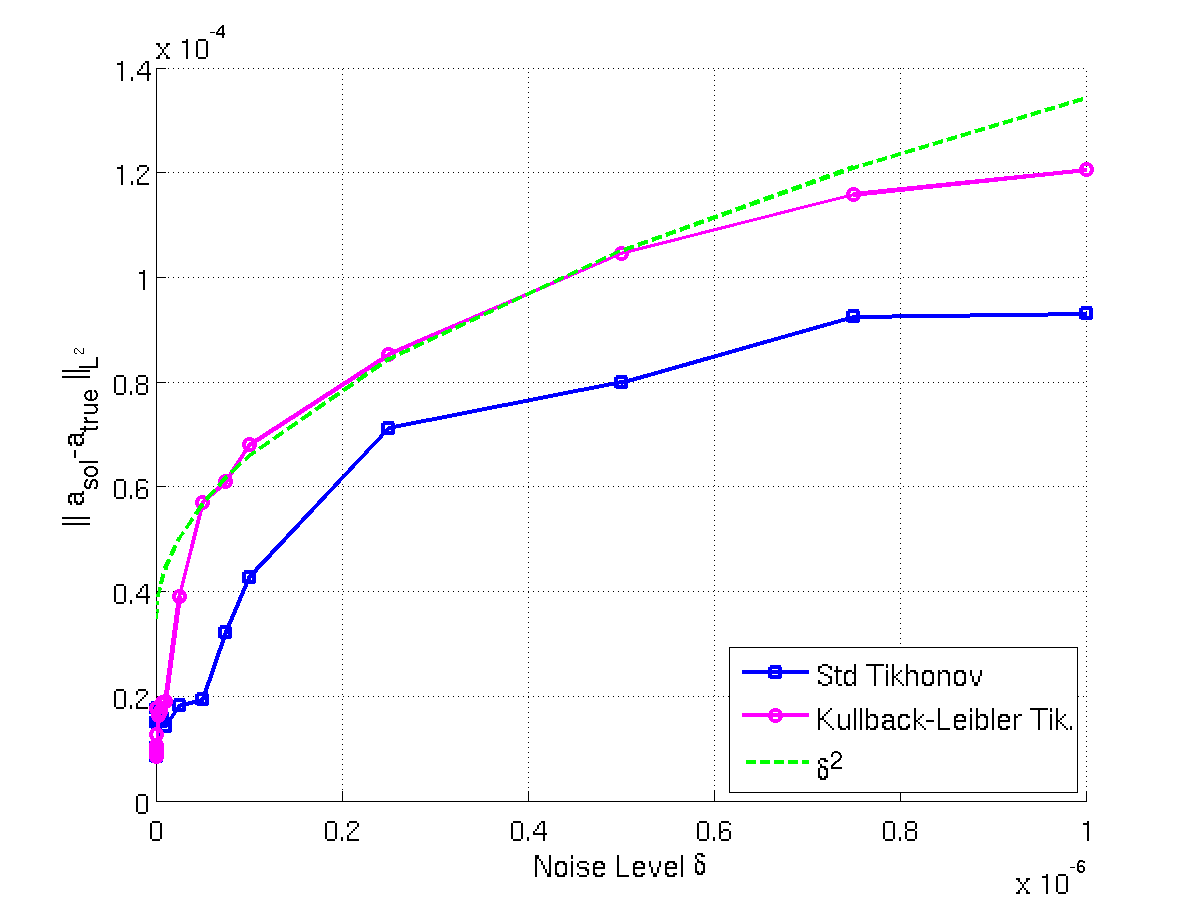}
\caption{The $L^2$-error $r(\delta) = \|a_{\beta}(\delta) - a^\dagger\|_{L^2(D)}$ as a function of the noise level $\delta$ satisfies the convergence rates of Theorem~\ref{th:convergence-rates1}. The dashed line is the function $g(\delta) = \sqrt{\delta}$.}
\label{figd}
\noindent %\hrulefill
\end{figure}

One illustration of Theorem~\ref{th:convergence-rates1} is given in Figure~\ref{figd}, whereby, for different convex regularization approaches, we calculated the $L^2$-error as a function of the noise level $\delta$, as it goes to zero. Note that the estimates of Theorem~\ref{th:convergence-rates1} are satisfied. In this specific example, for simplicity, we kept $\rho_m$ and $\gamma_n$ constant.%, since it was numerically solved in a fine mesh.

%------------------------------------------------------------------------------------------------
\subsection{Market Data: Henry Hub}\label{sec:hh}

We present below some reconstructions of the local volatility from call option prices on futures of the Henry Hub natural gas price traded in the CME stock exchange. In the present examples we use a uniform mesh with step sizes given by $\Delta t = 0.005$ and $\Delta y = 0.05$. We interpolate the data with a two-dimensional cubic spline. We also used a two-dimensional cubic spline basis for $\mathcal{D}(F)$ in the present set of experiments. We chose the appropriate discretization level for $\domain{F}$ by the discrepancy principle of Equation~(\ref{discrepancy}).

\begin{figure}[ht]
\centering
\includegraphics[width=0.45\textwidth]{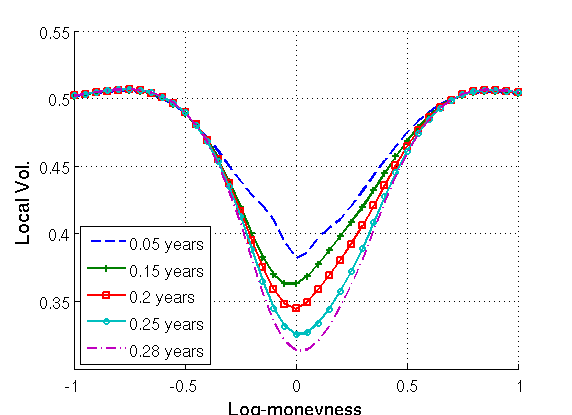}\hfill 
\includegraphics[width=0.45\textwidth]{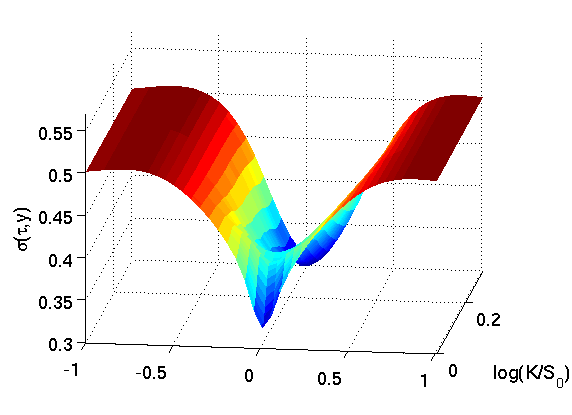}
\caption{Left: Reconstructed local volatility for some fixed maturities. Right: Reconstructed local volatility surface.}
\label{fig3}
\noindent %\hrulefill
\end{figure}

\begin{figure}[ht]
\centering
\includegraphics[width=0.45\textwidth]{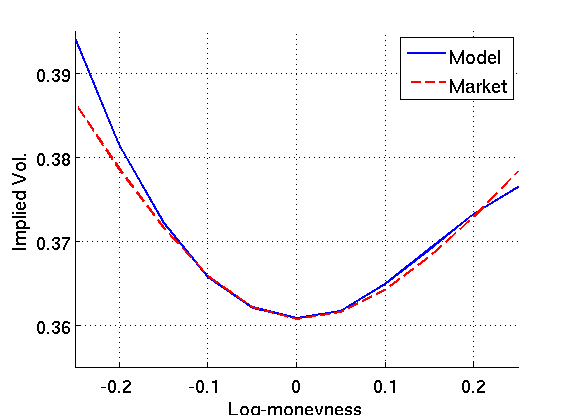}\hfill 
\includegraphics[width=0.45\textwidth]{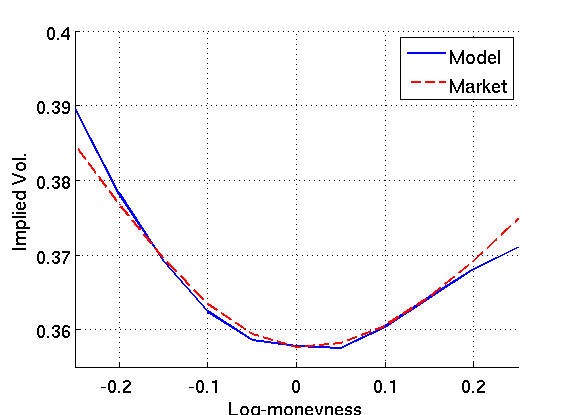}\hfill
\caption{Implied (Black-Scholes) Volatility for market prices (dashed line) and prices generated by the reconstructed volatility of Figure~\ref{fig3} (continuous line)for two different maturities.}
\label{fig3b}
\noindent %\hrulefill
\end{figure}

We used the standard $H^1(D)$ functional in these experiments with Tikhonov regularization, as in the above examples. It is important to note that, we used the traded price data as in \cite[Chapter 4]{vvlathesis}. In the latter, the authors have assumed that option prices were given as a function of the unknown commodity spot price, instead of the underlying future prices. 

Figure~\ref{fig3} presents the reconstructed local volatility for call option prices on Henry Hub future prices. Observe that the required smoothness is satisfied by the reconstructions. Figure~\ref{fig3b} presents the implied volatility for the traded prices and the prices generated by the reconstructed local volatility surface of Figure~\ref{fig3} for two different maturities. Note that they become more similar as we get closer to the so-called ``at-the-money'' values, i.e., $y = 0$. This turns out to be the region of the most liquid prices and thus less subjected to noise.

%------------------------------------------------------------------------------------------------
\subsection{Market Data: S\&P 500}\label{sec:spx}

The reconstructions of local volatility presented in Figures~\ref{fig4}, \ref{fig5}, \ref{fig6}, \ref{fig7} and \ref{fig8} were obtained from call option prices on the S\&P500 index traded at the NYSE on 05/09/2013. We used a uniform mesh with step sizes given by $\Delta t = 0.005$ and $\Delta y = 0.05$. We interpolate the data with the method proposed by N.~Kahale in \cite{kahale}. The main feature of this method is to avoid arbitrage opportunities by keeping the option prices convex functions of the strike. We also used a two-dimensional linear basis for $\mathcal{D}(F)$ in the present set of experiments. We chose the appropriate discretization level for $\domain{F}$ by the discrepancy principle of Equation~(\ref{discrepancy}). Again, we used the standard $H^1(D)$ functional in the Tikhonov regularization.

\begin{figure}[ht]
\centering
\includegraphics[width=0.33\textwidth]{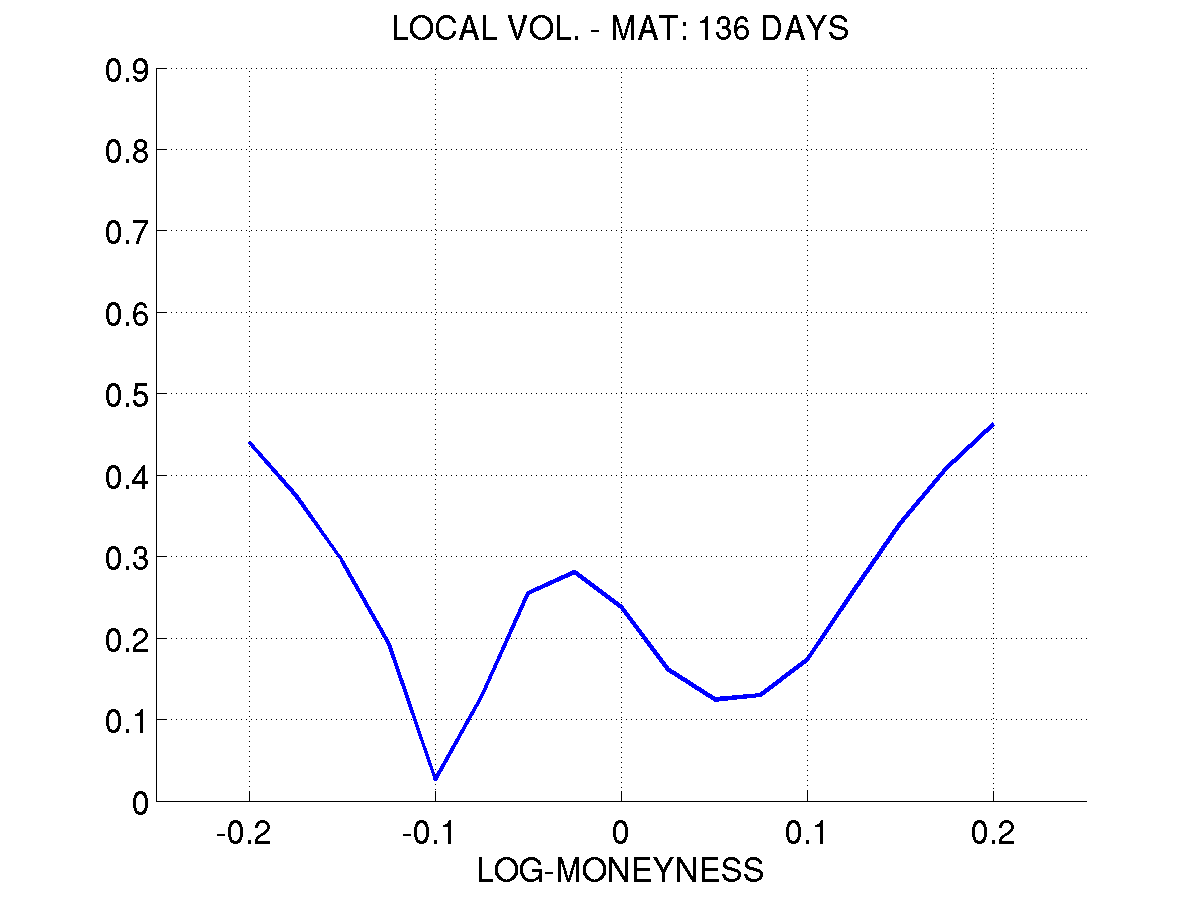}\hfill 
\includegraphics[width=0.33\textwidth]{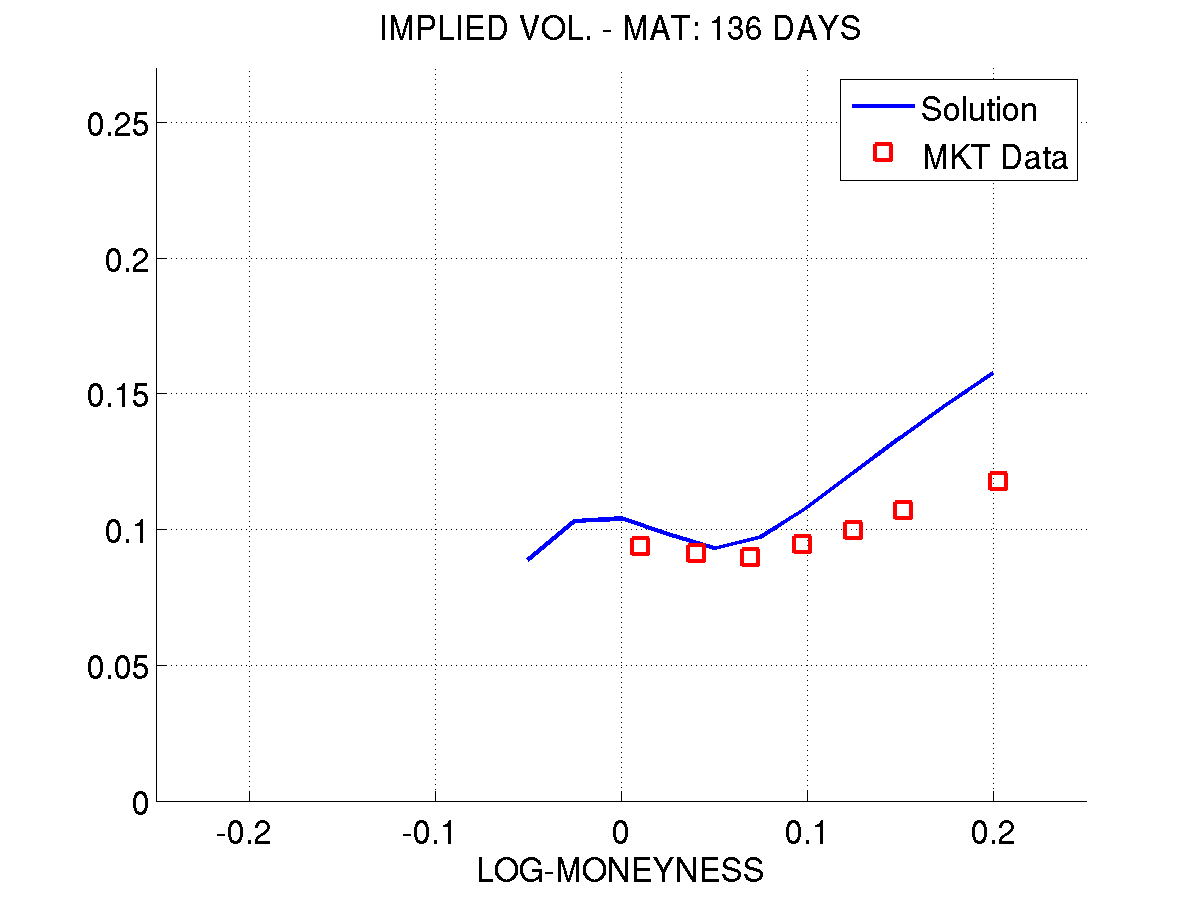}\hfill
\includegraphics[width=0.33\textwidth]{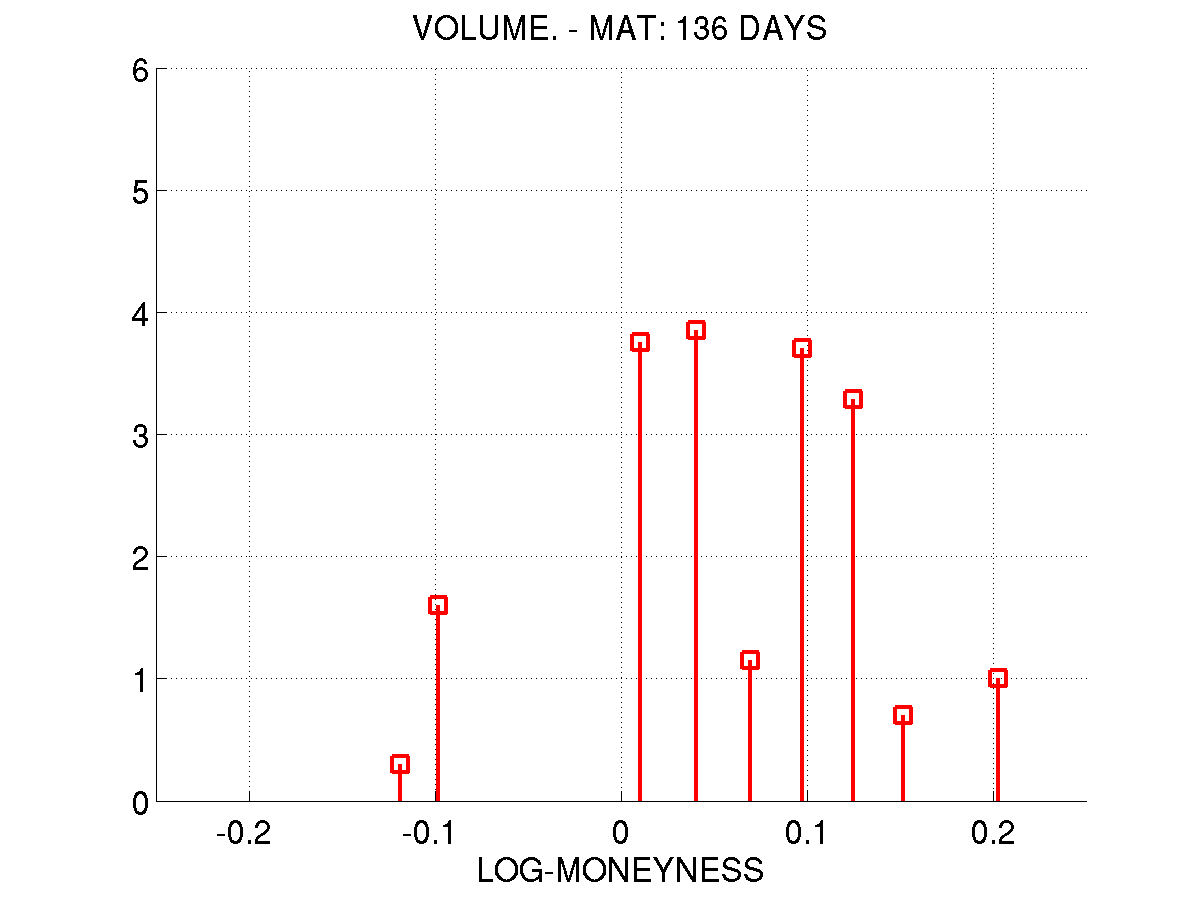}
\caption{Left: Local volatility reconstructed from S\&P500 call option prices traded on 05/09/2013 maturing on 09/21/2013. Center: Implied volatility of market data (squares) and implied volatility of the local volatility of the first figure (continuous line). Right: Volume of the prices used in this reconstruction in a $\log_{10}$ scale.}
\label{fig4}
\noindent %\hrulefill
\end{figure}

\begin{figure}[ht]
\centering
\includegraphics[width=0.33\textwidth]{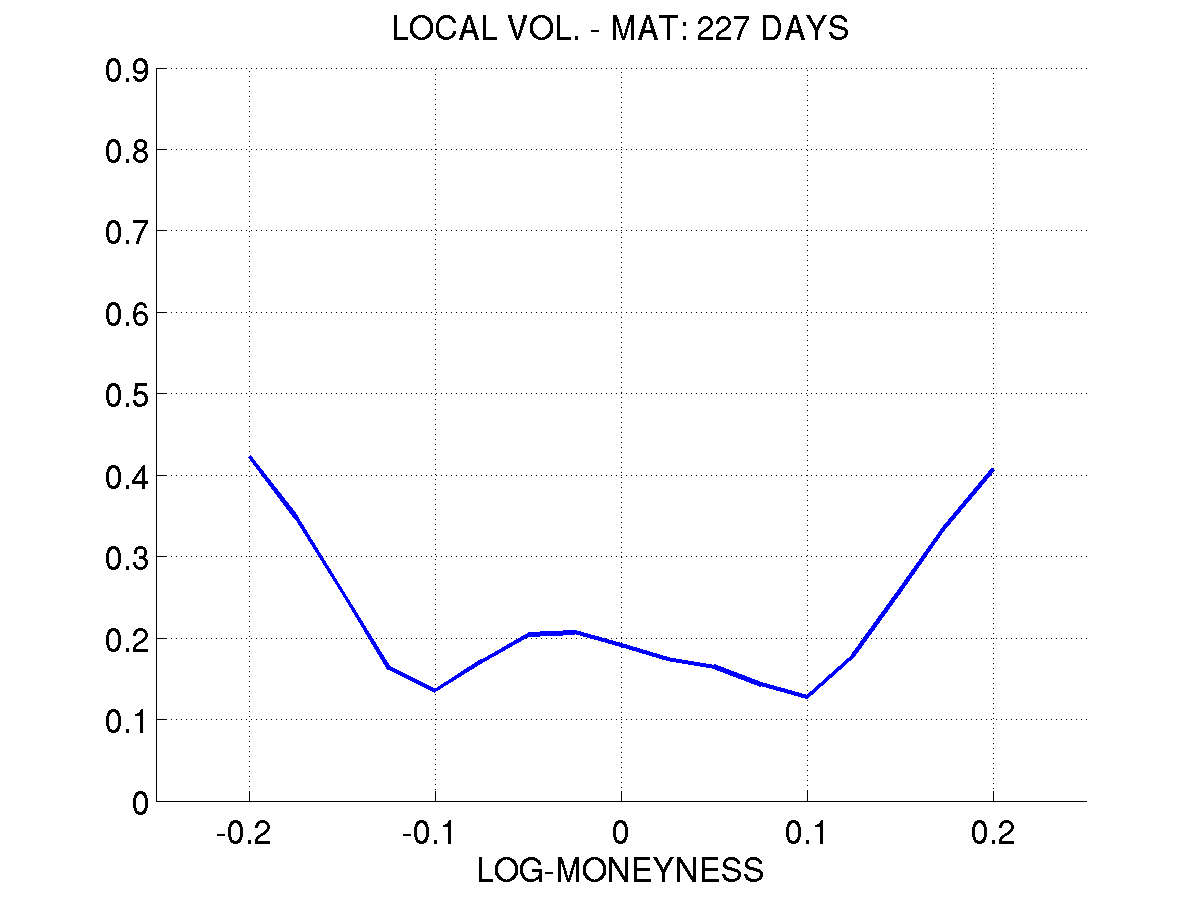}\hfill 
\includegraphics[width=0.33\textwidth]{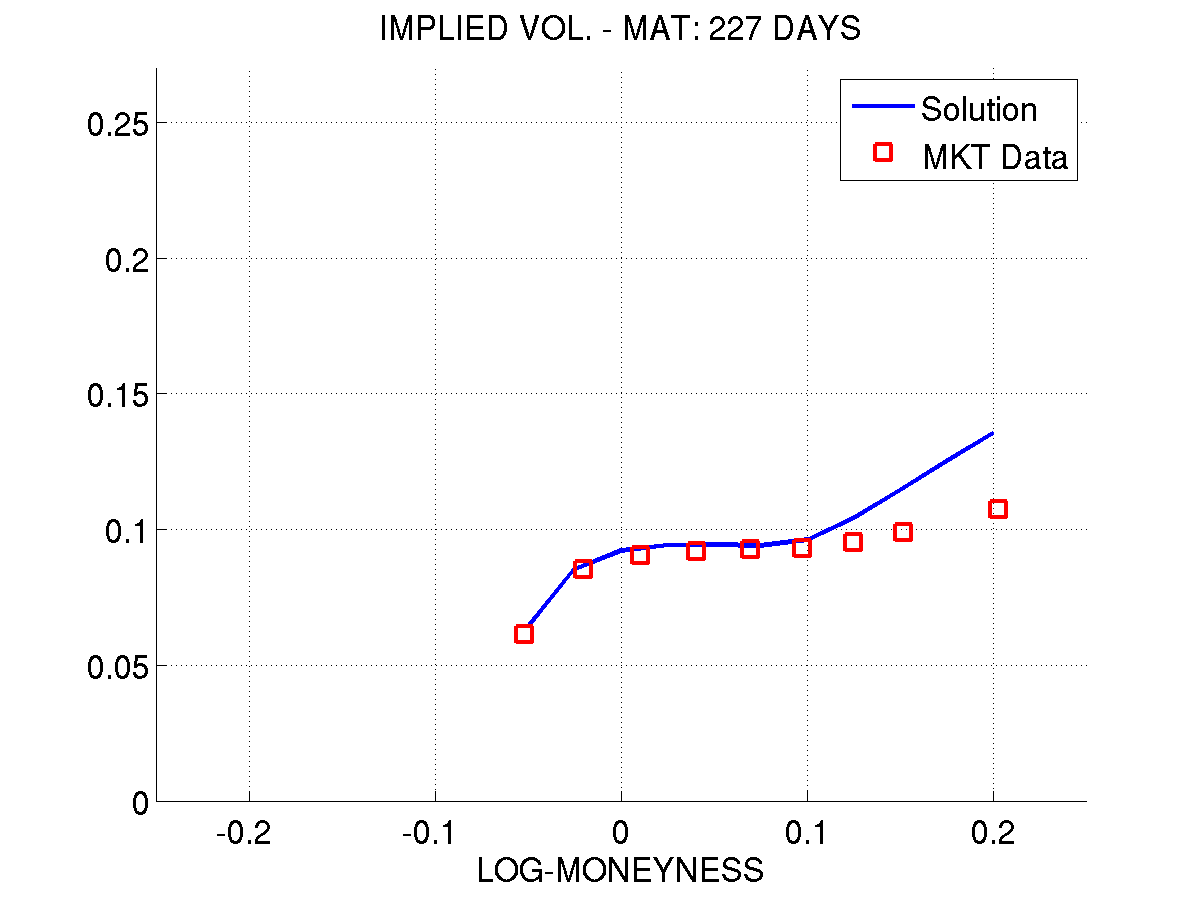}\hfill
\includegraphics[width=0.33\textwidth]{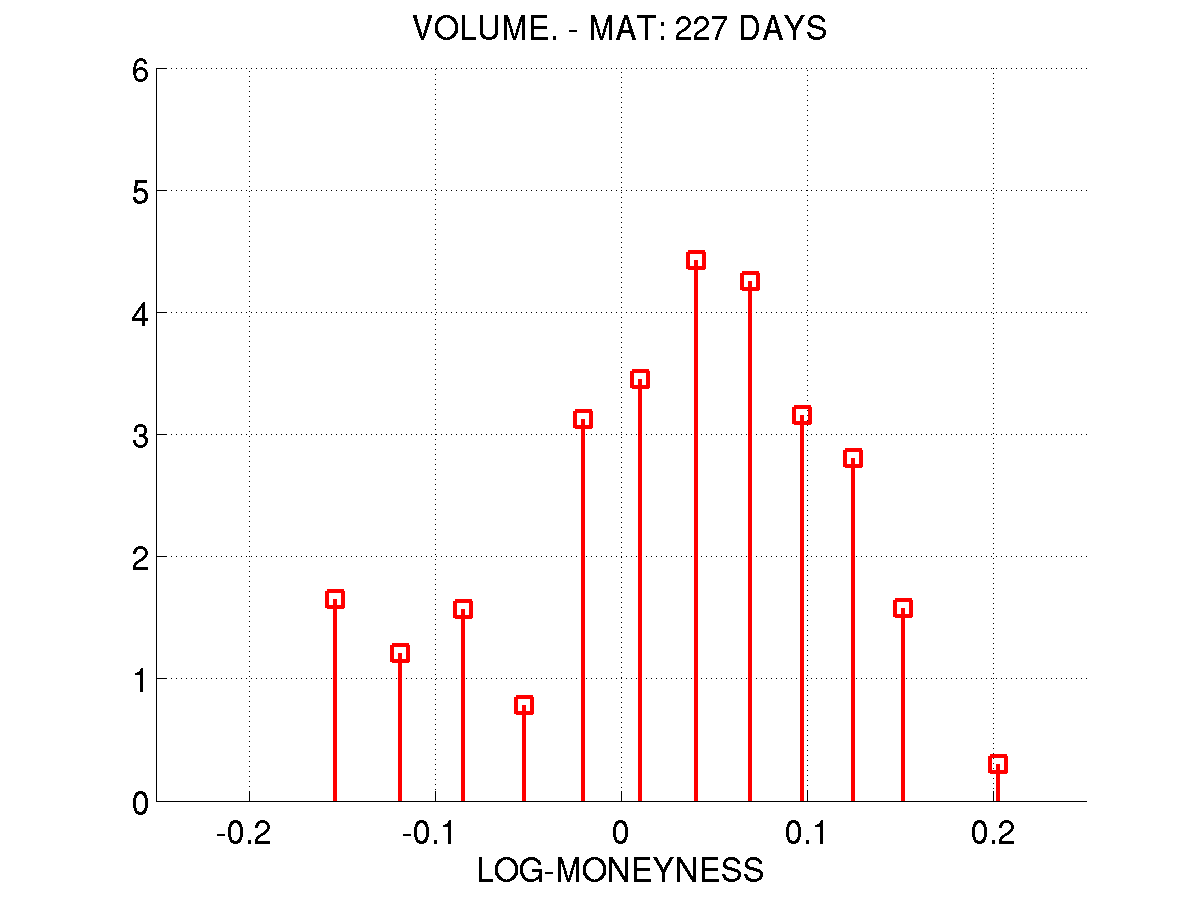}
\caption{Left: Local volatility reconstructed from S\&P500 call option prices traded on 05/09/2013 maturing on 12/21/2013. Center: Implied volatility of market data (squares) and implied volatility of the local volatility of the first figure (continuous line). Right: Volume of the prices used in this reconstruction in a $\log_{10}$ scale.}
\label{fig5}
\noindent %\hrulefill
\end{figure}

\begin{figure}[ht]
\centering
\includegraphics[width=0.33\textwidth]{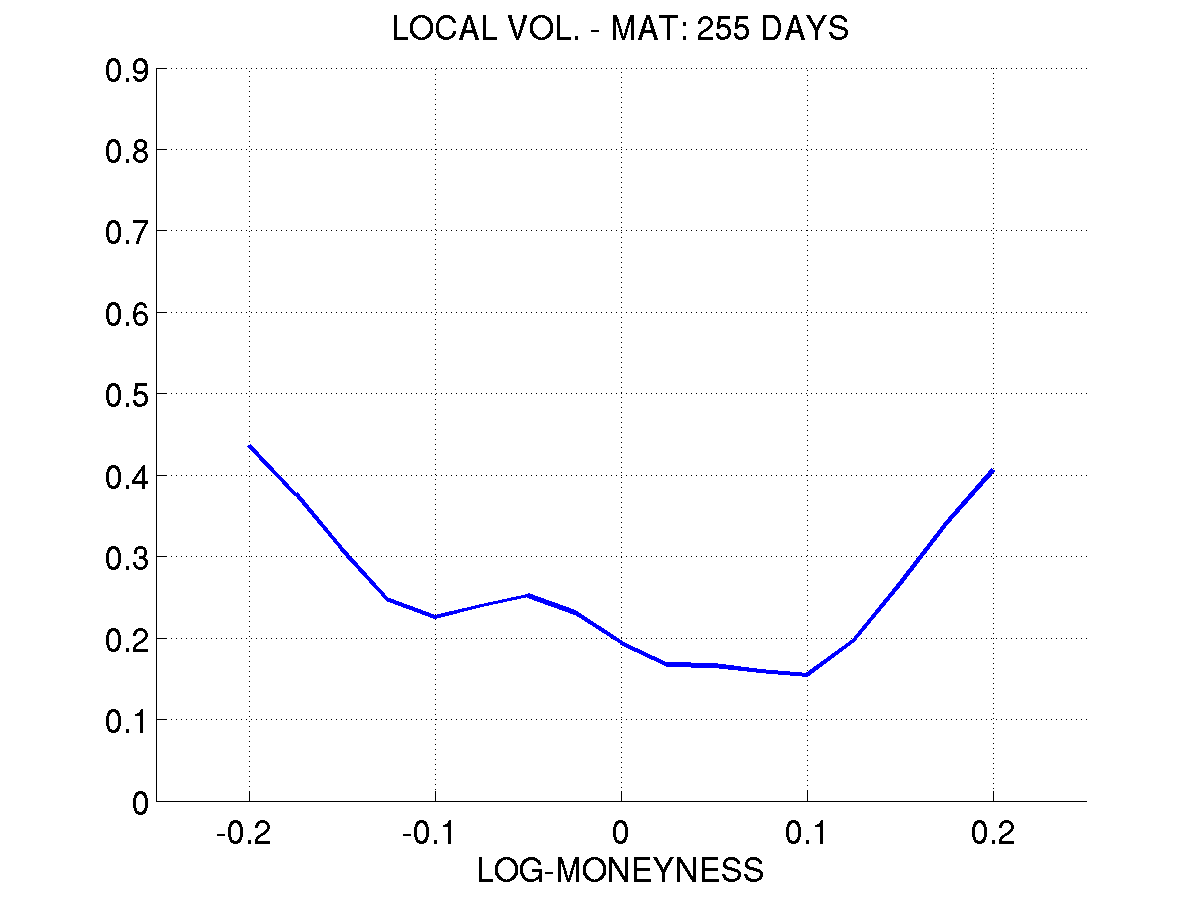}\hfill 
\includegraphics[width=0.33\textwidth]{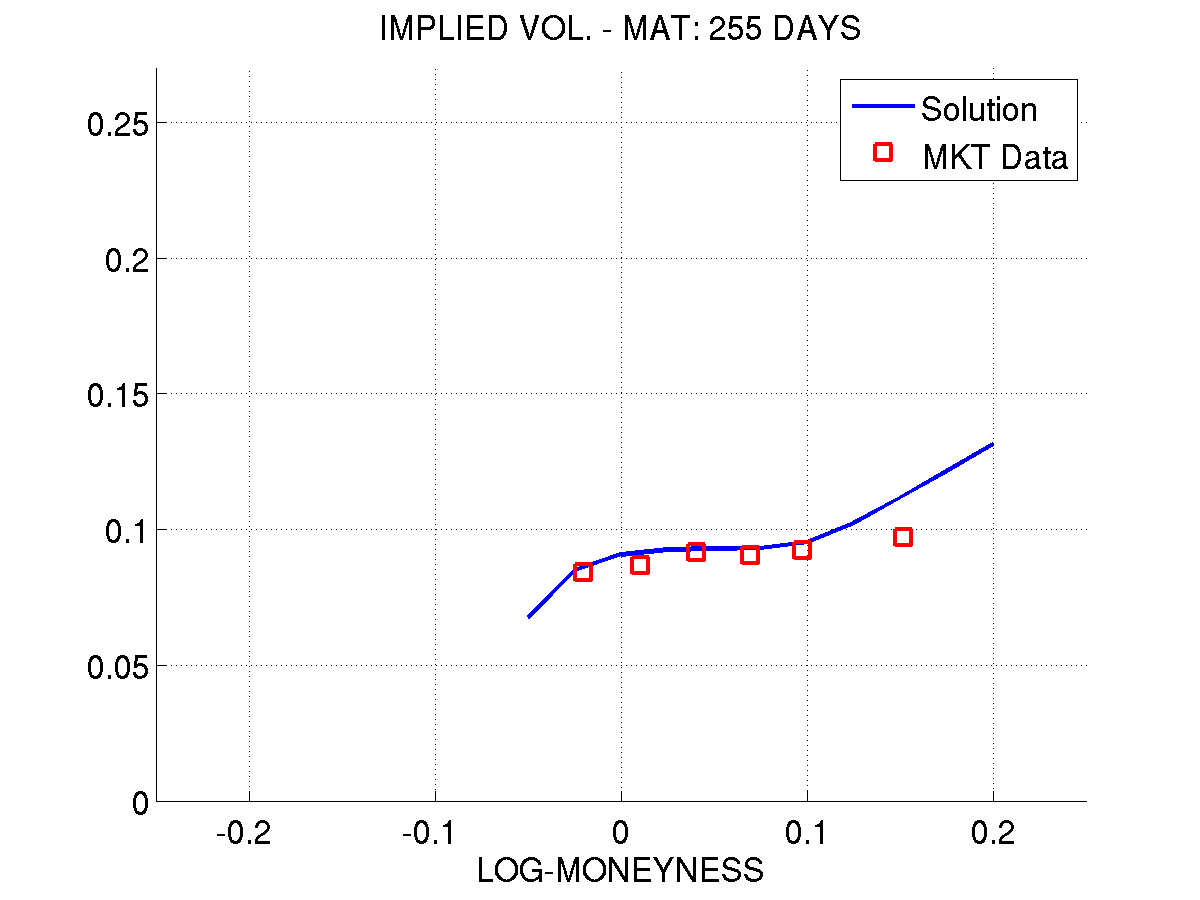}\hfill
\includegraphics[width=0.33\textwidth]{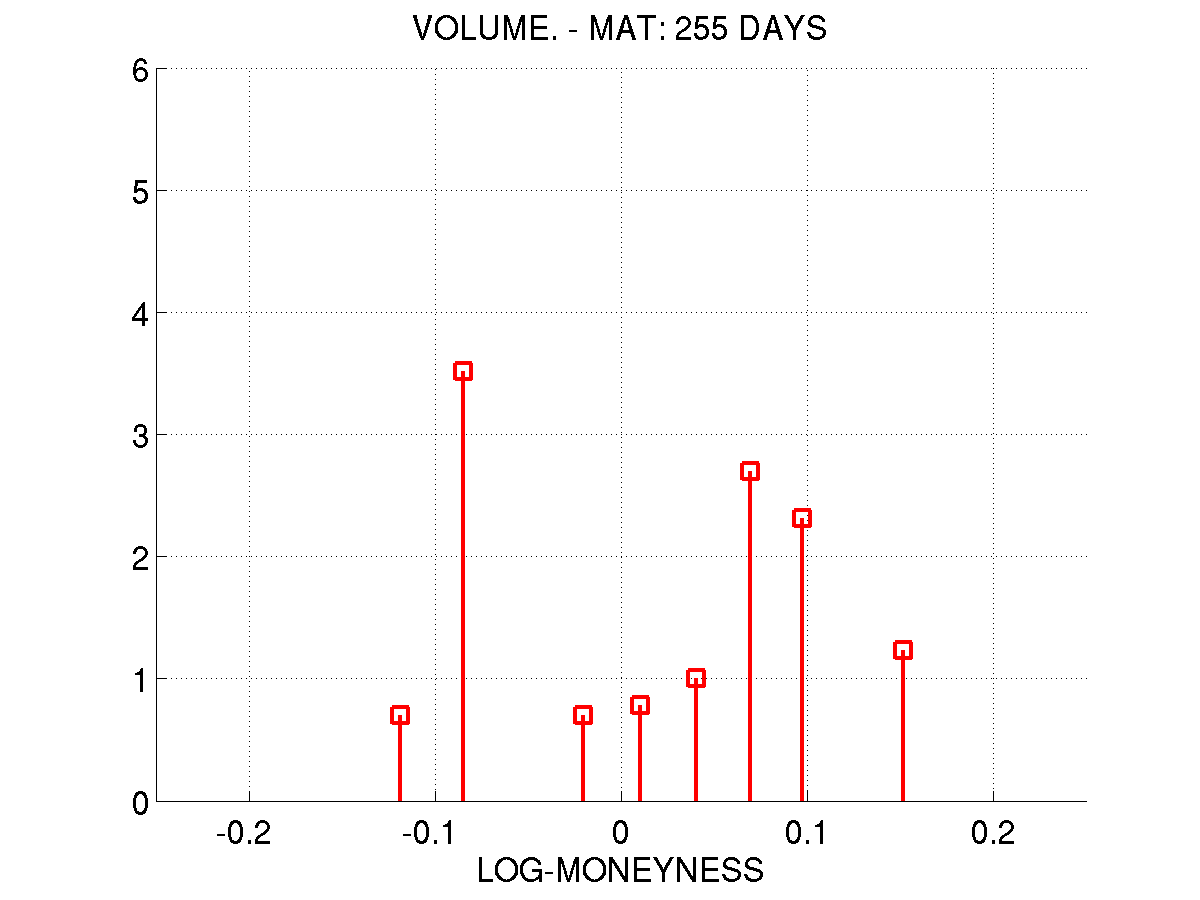}
\caption{Left: Local volatility reconstructed from S\&P500 call option prices traded on 05/09/2013 maturing on 01/18/2014. Center: Implied volatility of market data (squares) and implied volatility of the local volatility of the first figure (continuous line). Right: Volume of the prices used in this reconstruction in a $\log_{10}$ scale.}
\label{fig6}
\noindent %\hrulefill
\end{figure}

\begin{figure}[ht]
\centering
\includegraphics[width=0.33\textwidth]{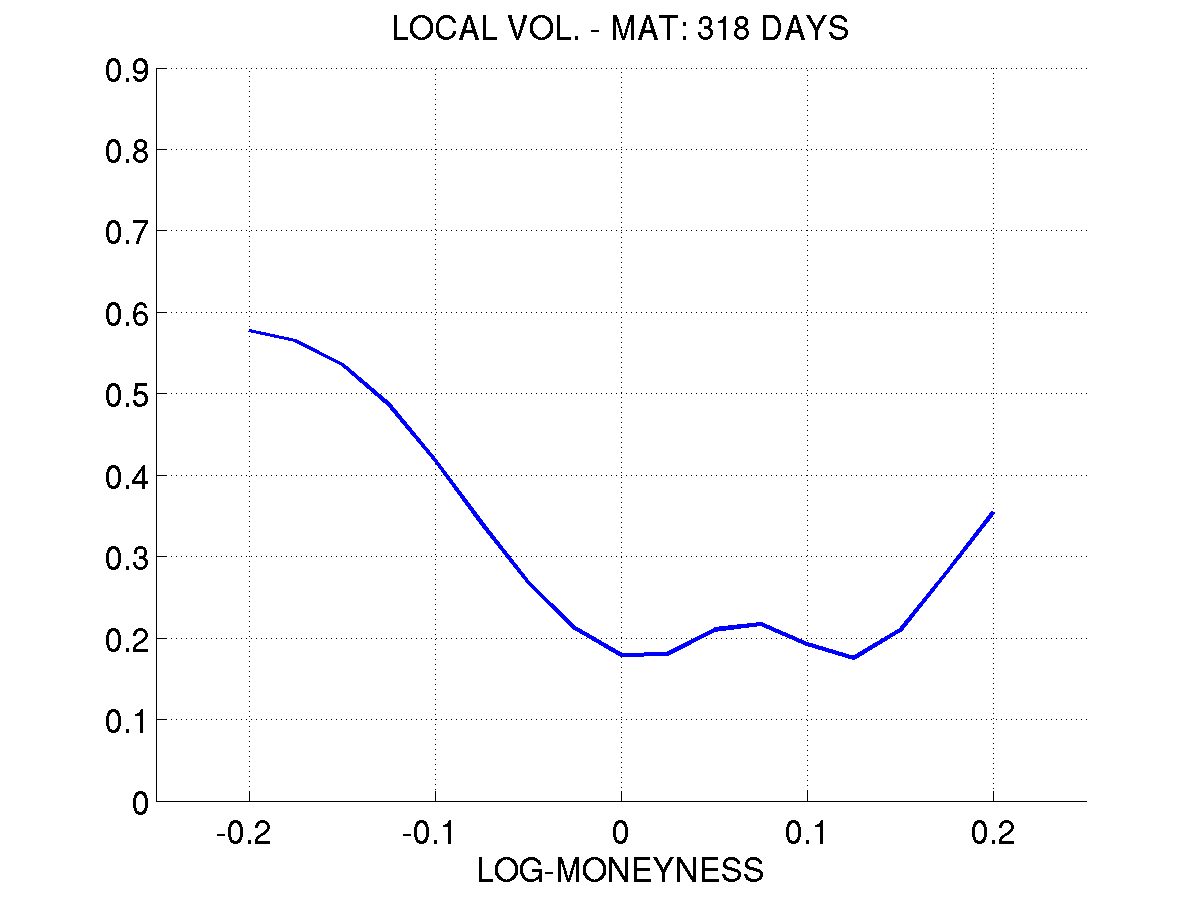}\hfill 
\includegraphics[width=0.33\textwidth]{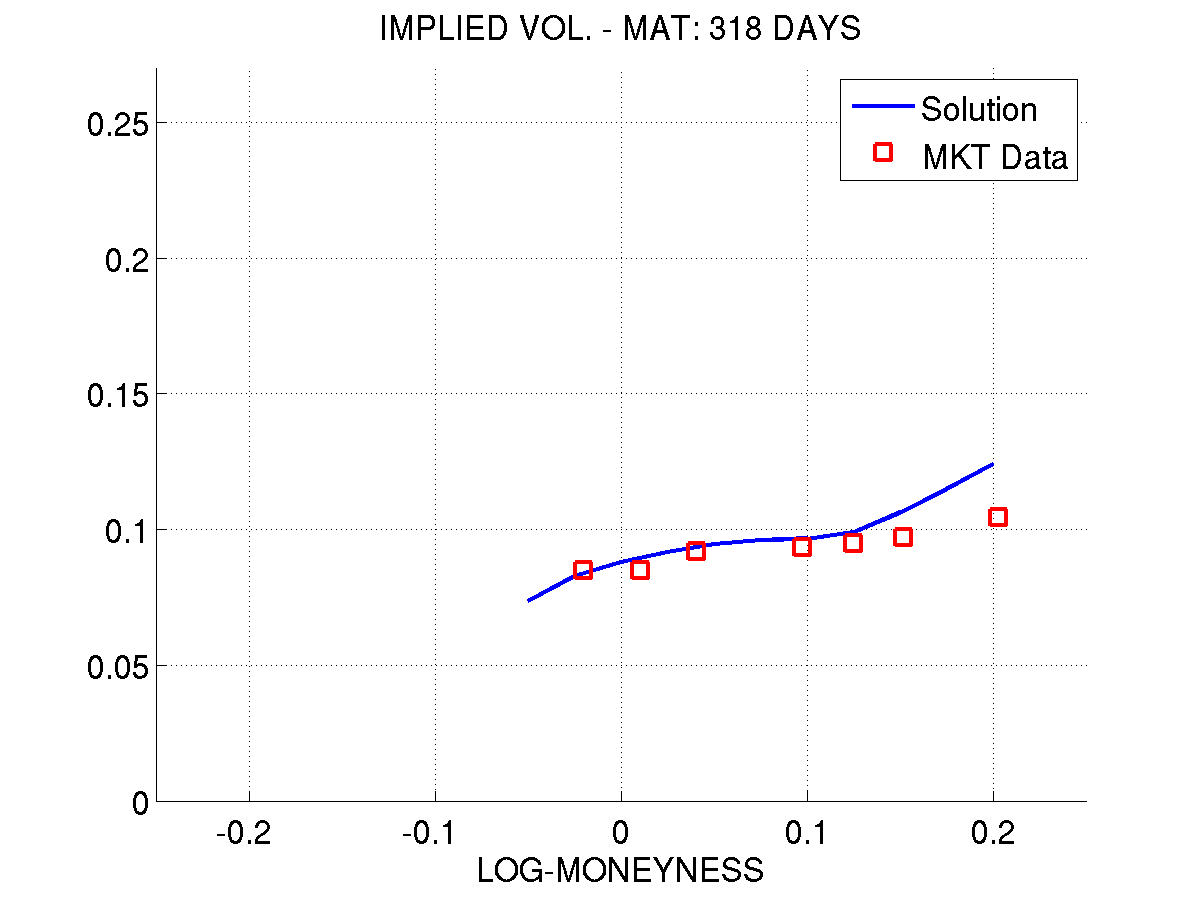}\hfill
\includegraphics[width=0.33\textwidth]{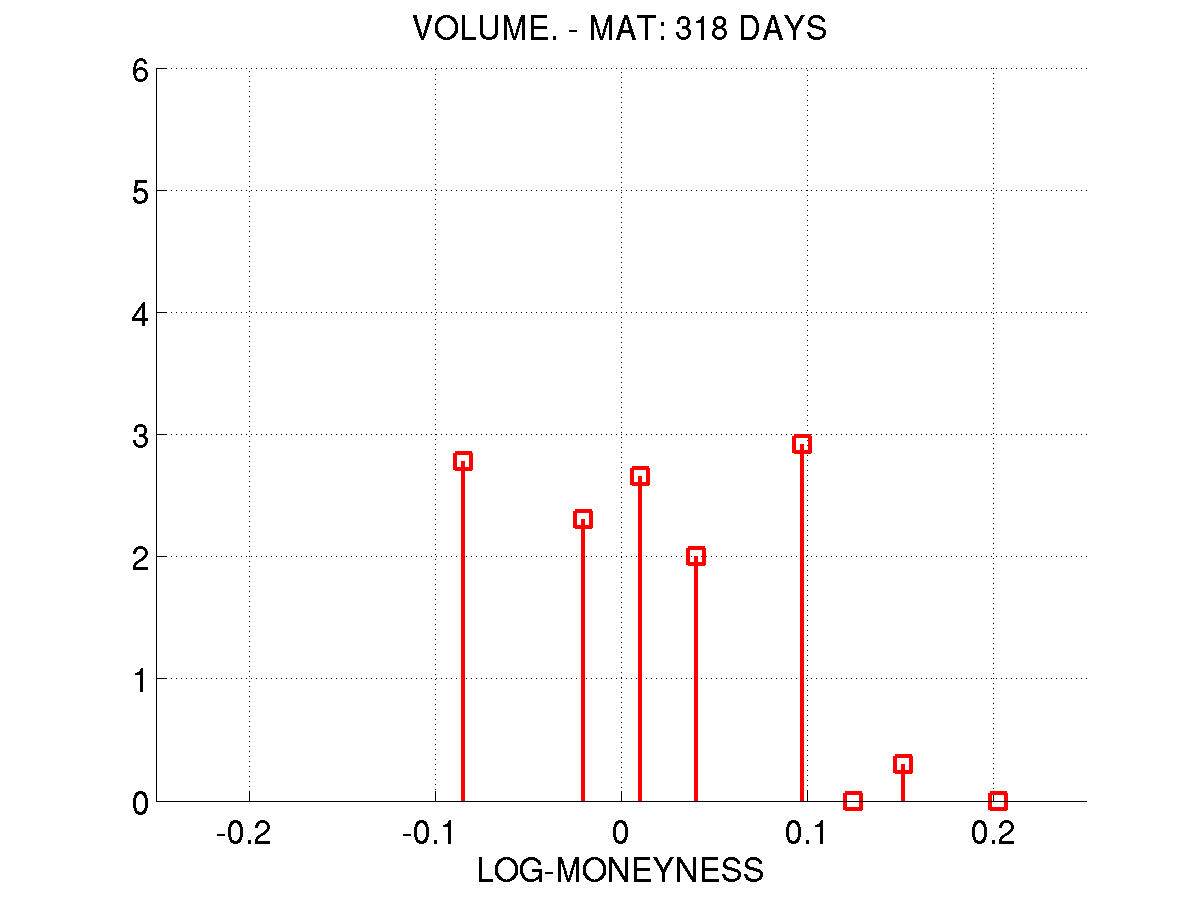}
\caption{Left: Local volatility reconstructed from S\&P500 call option prices traded on 05/09/2013 maturing on 03/22/2014. Center: Implied volatility of market data (squares) and implied volatility of the local volatility of the first figure (continuous line). Right: Volume of the prices used in this reconstruction in a $\log_{10}$ scale.}
\label{fig7}
\noindent %\hrulefill
\end{figure}

\begin{figure}[ht]
\centering
\includegraphics[width=0.33\textwidth]{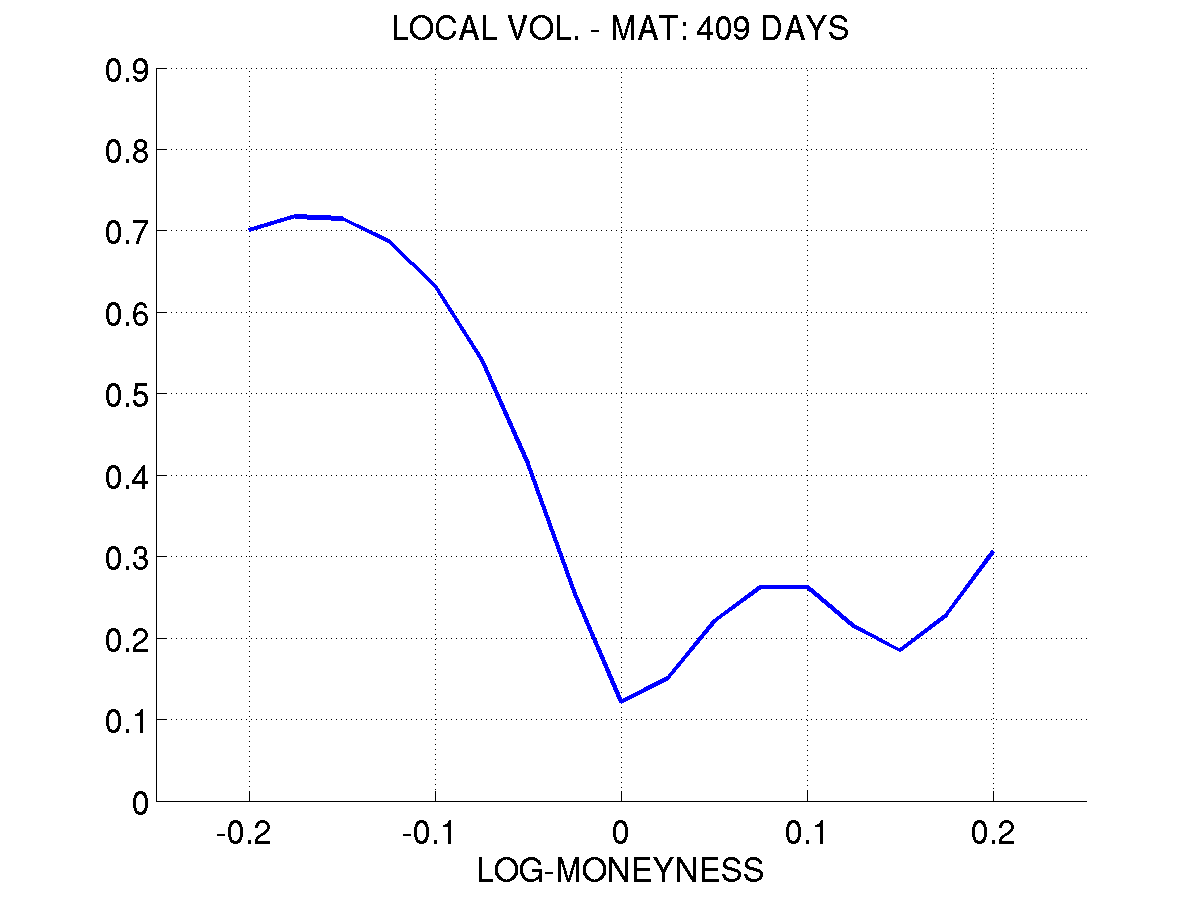}\hfill 
\includegraphics[width=0.33\textwidth]{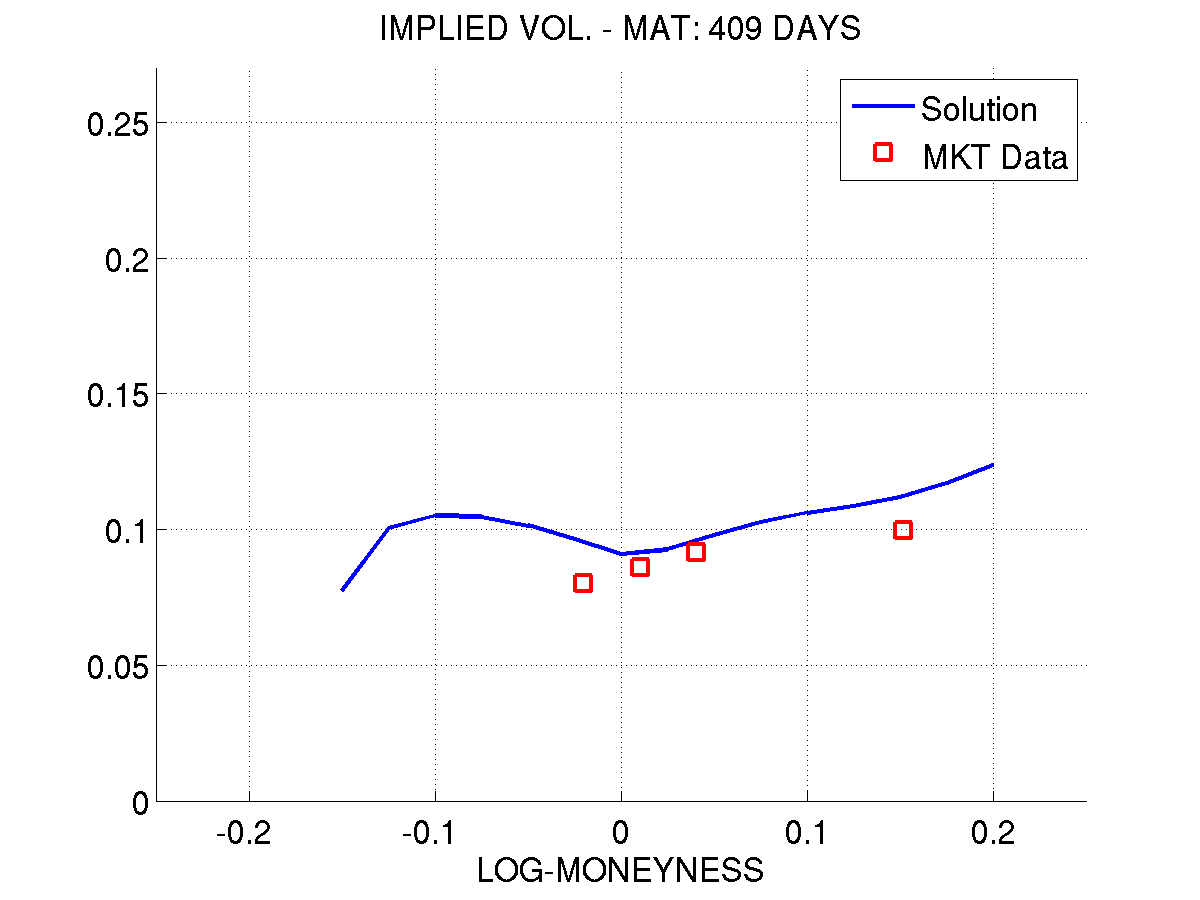}\hfill
\includegraphics[width=0.33\textwidth]{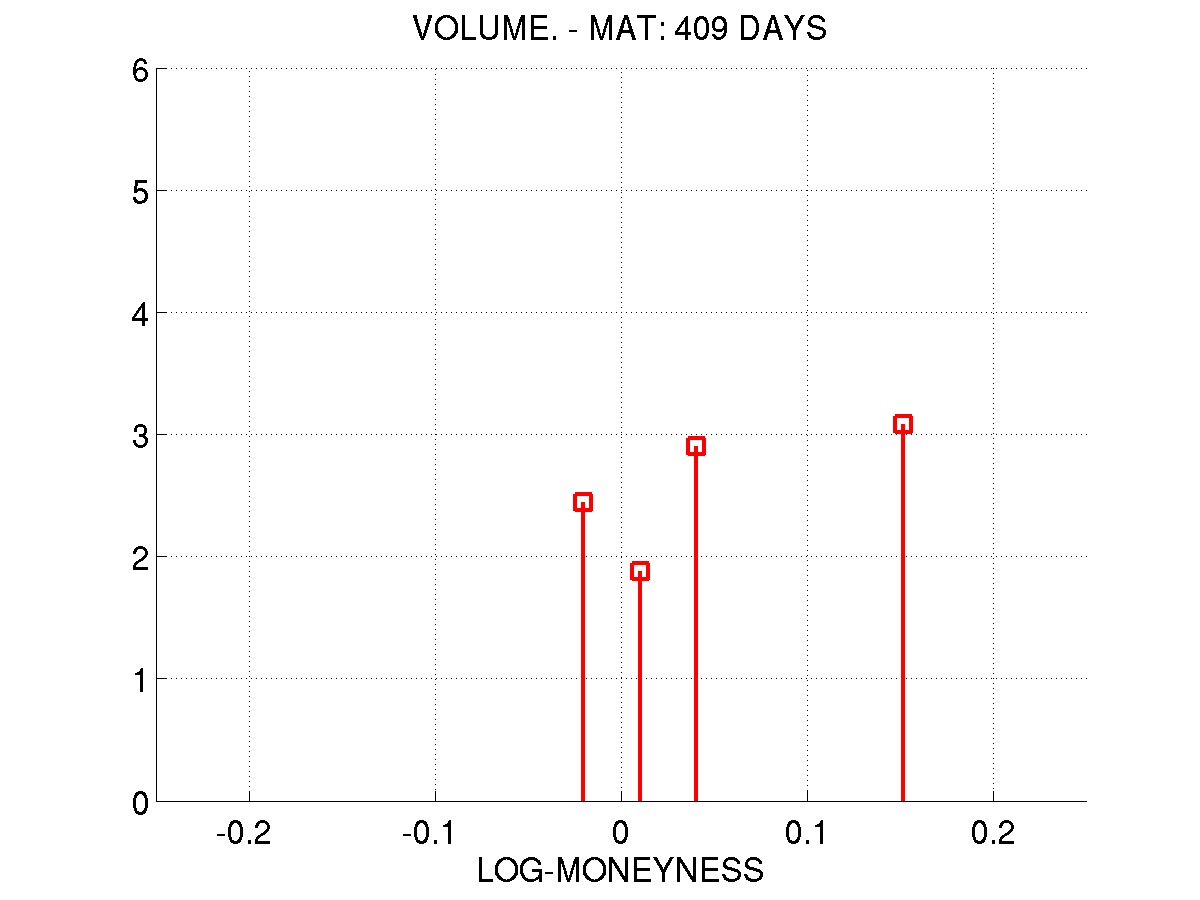}
\caption{Left: Local volatility reconstructed from S\&P500 call option prices traded on 05/09/2013 maturing on 06/21/2014. Center: Implied volatility of market data (squares) and implied volatility of the local volatility of the first figure (continuous line). Right: Volume of the prices used in this reconstruction in a $\log_{10}$ scale.}
\label{fig8}
\noindent %\hrulefill
\end{figure}

The left picture in Figures~\ref{fig4}, \ref{fig5}, \ref{fig6}, \ref{fig7} and \ref{fig8}, presents the calibrated local volatility curves from S\&P500 call option data for different maturities. Although we have used a piecewise linear basis, the required smoothness was satisfied by the reconstructions. The center picture presents the implied volatility of market data (squares) and implied volatility of the prices related to the local volatility of the left picture (continuous line). Note that these two implied volatility are very close, specially when the trading volume of prices, presented in the right picture, is high, i.e., the prices are more liquid. 

Note that, intuitively, more liquid prices are less subject to noise introduced by trading. Then, such numerical results show a strong agreement with our theoretical results of previous sections.

%-----------------------------------------------------------------------------------------------
\section{Conclusions and Final Remarks}
Although our approach emphasizes the discreteness of the data and of
the domain where the reconstruction is taking place, it does not rely
on a binomial tree approach or a discrete stochastic process. Thus the
approach differs substantially from attempts to calibrate the
transition probability of the binomial tree of previous works such as
\cite{jackwerth}

As mentioned in the introduction, one of the novelties of our approach to this subject is 
that of 
establishing convergence and convergence rate results of the discrete approximation. Such a result allows us to take into account different levels of uncertainties in the traded prices and convert it into estimates for the choice of the discretization level of the volatility calibration. 

In particular, our theoretical and numerical results show that there exists an intrinsic connection between different levels of uncertainty in the data and the corresponding calibrated volatility surface. Furthermore, if the uncertainty level goes to zero,  then the calibrated volatility converges to the true volatility. Finally, if we have an upper bound on the financial data uncertainty, then there exists an optimal choice for the discretization level of the volatility surface. Although we do not present a theoretical proof, we proposed a discrepancy principle to determine the optimal discretization level. 
This claim is illustrated by the results of Section~\ref{sec:synthetic}.
The theoretical aspects concerning the implementation of this discrepancy principle are beyond the scope of the present work and shall be discussed in \cite{ADZ2013}.

We also presented reconstructions of the local volatility surface with market data. Using  S\&P500 data as a validation criteria, we compared the implied volatility obtained from traded data and the prices given by the model. 
This was  compared to the volume information in the validation of results. As expected, 
the more liquid the traded contracts, the better the results. In other words, the implied volatilities almost coincided for highly traded option strikes and maturities. 
These results can be found in Section~\ref{sec:spx}.

We also applied our technique to Henry Hub natural gas data. In this case,  we used the same criteria for validating the results, but we did not have volume information. However, for prices near or at the money, we also had a good match of the implied volatilities of market and model prices. This is justified by the natural higher liquidity of the prices around the at-the-money level. These results can be found in Section~\ref{sec:hh}.

\section*{Acknowledgements}
V.A. acknowledges and thanks CNPq, Petroleo Brasileiro S.A. and Ag\^encia Nacional do Petr\'oleo for the financial support during the time when this work was developed.
ADC. acknowledges and thanks the financial support from CNPq SwB grant 200815/2012-1, and from ARD FAPERGS grant 0839 12-3.
J.P.Z. acknowledges and thanks the financial support from CNPq through grants 302161/2003-1 and
474085/2003-1, and from FAPERJ through the programs {\em Cientistas do Nosso Estado} and {\em Pensa Rio}.

%-----------------------------------------------------------------------------------------------
\bibliographystyle{siam}%{amsplain}%{abbrv}
%\bibliography{lastedVersion}

%-----------------------------------------------------------------------------------------------
%%%%%%%%%%%%%%%%%%%%%%%%%%%%%%%%%%%%%%%%%%%%%%%%%%%%%%%%%%%%%%%%%%%%%%
\appendix
\section{Properties of the forward operator and ill-posedness of the inverse problem}\label{sec:2}
We now summarize some properties of the operator $F$ that have appeared in the literature \cite{DeCezaroScherzerZubelli09, Crepey-2003,
Egger-Engl2005,Hof-Kra-2005, HKPS-2007}. Such properties were used to prove some regularizing
aspects of the approximate solutions of the inverse problem under consideration.
%%%%%%%%%%%%%%%%%%%%%%%%%%%%% THEOREM
\begin{theo}\label{th:prop-F}
\begin{enumerate}
\item\label{item1} The operator $F\,:\,\domain{F} \subset H^{1+\varepsilon}(D)
\longrightarrow W^{1,2}_2(D)$ is compact. Moreover, $F$ is
weakly (sequentially) continuous and thus weakly closed.

\item\label{item2} Let $D$ be a bounded subset of $\mathbb{R}^2$ with
Lipschitz boundary. Moreover, let $a_n \in \mathcal{D}(F)$ with $a_n
\rightharpoonup a$ in $\Y$. Then $F(a_n) \rightharpoonup F(a)$ in
$\Y$.

\item\label{item3} $F$  is differentiable at $a \in \domain{F}$ in the direction $h$
such that $a+h \in  \domain{F}$. The derivative $F'(a)$ satisfies
\begin{align}\label{eq:directional_deriv}
-(u'\cdot h)_{\tau}+a((u'\cdot h)_{yy}-(u'\cdot h)_y)=-h(u_{yy}-u_y)
\end{align}
with homogeneous boundary and initial conditions. Also, $F'(a)$ is
extendable to a bounded linear operator on
$H^{1+\varepsilon}(D)$, i.e.,
\begin{align}\label{eq:C-limitation-der-F}
||F'(a)h||_{W^{1,2}_2(D)}\leq
C||h||_{H^{1+\varepsilon}(D)}.
\end{align}
 Moreover, $F'(a)$ satisfies the Lipschitz condition
\begin{align}\label{eq:Lipschitz-der-F}
\norm{F'(a) -
F'(a+h)}_{\mathcal{L}(H^{1+\varepsilon}(D);W^{1,2}_2(D))}
\leq c\norm{h}_{H^{1+\varepsilon}(D)}\,,
\end{align}
for $a+h \in \domain{F}$.
\end{enumerate}
\end{theo}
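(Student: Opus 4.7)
The three items rely on standard parabolic-PDE machinery applied to Dupire's equation \eqref{1} and its linearization, so my strategy would be to reduce each claim to an energy estimate for the (forward or linearized) parabolic equation and then invoke Sobolev embedding.

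For item \ref{item1}, I would exploit the fact that $\mathcal{D}(F)$ consists of coefficients $a$ that are bounded below by $\underline{a}>0$, above by $\overline{a}$, and uniformly bounded in $H^{1+\varepsilon}(D)$; in two space-time dimensions $H^{1+\varepsilon}(D) \hookrightarrow L^\infty(D)$, so classical $L^p$-estimates for linear parabolic equations with measurable coefficients (Ladyzhenskaya–Solonnikov–Ural'tseva type) give that, as $a$ ranges in a bounded subset of $\mathcal{D}(F)$, the difference $F(a)=u(a)-u(a_0)$ stays in a bounded set of $W^{1,2}_p(D)$ for some $p>2$. The compact embedding $W^{1,2}_p(D)\hookrightarrow W^{1,2}_2(D)$ then yields compactness of $F$, and weak sequential closedness follows by identifying the limit of a weakly convergent sequence through a.e.\ pointwise subsequential convergence of the coefficients.

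For item \ref{item2}, the point is that although the convergence $a_n\rightharpoonup a$ is only weak in $L^2(D)$, the sequence stays in the pointwise-bounded set $\mathcal{D}(F)$, so one can pass to a subsequence converging a.e.\ and invoke dominated convergence to upgrade to strong $L^p$-convergence of $a_n$ for every finite $p$. Plugging this into the weak formulation of \eqref{1} and using uniform $W^{1,2}_2(D)$-estimates for $u(a_n)-u(a_0)$, I would pass to the limit in the nonlinear term $a_n(u_{n,yy}-u_{n,y})$ and conclude $F(a_n)\rightharpoonup F(a)$ in $L^2(D)$; uniqueness of the limit then upgrades subsequential convergence to convergence of the full sequence.

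For item \ref{item3}, I would introduce the candidate derivative as the solution $u'\cdot h$ of the linearized problem \eqref{eq:directional_deriv} with zero data, and set $w_h:=u(a+h)-u(a)-u'\cdot h$. A direct computation shows that $w_h$ solves a parabolic equation with the same principal operator and right-hand side of the form $-h((u(a+h)-u(a))_{yy}-(u(a+h)-u(a))_y)$, which is quadratic in $h$ once an a priori $W^{1,2}_2$-bound on $u(a+h)-u(a)$ in terms of $\|h\|_{L^\infty}$ is used. The parabolic energy estimate then yields $\|w_h\|_{W^{1,2}_2(D)}=O(\|h\|^2_{H^{1+\varepsilon}(D)})$, giving Fréchet differentiability. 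The bound \eqref{eq:C-limitation-der-F} is the same energy estimate applied directly to \eqref{eq:directional_deriv}, using that $(u_{yy}-u_y)\in L^2(D)$ and $\|h\|_{L^\infty}\lesssim \|h\|_{H^{1+\varepsilon}}$; the Lipschitz bound \eqref{eq:Lipschitz-der-F} comes from writing the equation satisfied by $(F'(a)-F'(a+h))\phi$, whose right-hand side is linear in $h$ times derivatives of $u(a)$ and of $\phi$, and then invoking the same estimate.

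The main obstacle I expect is not any individual item but making precise the interaction between the coefficient regularity $H^{1+\varepsilon}(D)$ and the range regularity $W^{1,2}_2(D)$ at the endpoints of $D=(0,T]\times I$: the boundary conditions in \eqref{1} mix a Dirac-like initial datum (derivative of the payoff) with prescribed limits at $y\to\pm\infty$, so one must work on a truncated bounded $I$ and carefully justify that the $L^p$-parabolic regularity used in items \ref{item1} and \ref{item3} does hold up to the boundary, or alternatively subtract $u(a_0)$ at the outset (as the definition of $F$ does) to place the problem in a space of functions with zero initial data, where the standard $W^{1,2}_p$-theory applies cleanly.
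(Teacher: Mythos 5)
First, note that the paper itself does not prove Theorem~\ref{th:prop-F}: its ``proof'' is a pointer to \cite{DeCezaroScherzerZubelli09,Egger-Engl2005}, so your sketch is necessarily doing more work than the text does. Your treatment of item \ref{item3} is essentially the argument of those references and is sound: the remainder $w_h=u(a+h)-u(a)-u'\cdot h$ solves the same parabolic operator with right-hand side $-h\bigl((u(a+h)-u(a))_{yy}-(u(a+h)-u(a))_y\bigr)$, which is $O(\norm[\X]{h}^2)$ once the first-order a priori bound is in hand, and \eqref{eq:C-limitation-der-F} and \eqref{eq:Lipschitz-der-F} follow from the same energy inequality.

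Items \ref{item1} and \ref{item2}, however, each contain a step that fails. For item \ref{item1} you deduce compactness of $F$ from the embedding $W^{1,2}_p(D)\hookrightarrow W^{1,2}_2(D)$ for $p>2$; this embedding is continuous but \emph{not} compact, because the top-order derivatives only gain integrability, and $L^p(D)\hookrightarrow L^2(D)$ on a bounded domain is never compact (a sequence whose second derivatives oscillate, bounded in $L^p$, has no subsequence whose second derivatives converge in $L^2$). Compactness must instead be harvested on the domain side: $\domain{F}$ is a bounded subset of $\X$ with $D\subset\R^2$, the embedding $H^{1+\varepsilon}(D)\hookrightarrow C(\overline{D})$ is compact, so a weakly convergent sequence of coefficients converges uniformly, and continuous dependence of $u(a)$ on $a$ in the $L^\infty$-norm then yields strong convergence of $F(a_n)$ in $W^{1,2}_2(D)$. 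For item \ref{item2} you claim that $a_n\rightharpoonup a$ in $\Y$ together with the pointwise bounds $\underline{a}\le a_n\le\overline{a}$ permits extraction of an a.e.-convergent subsequence; this is false --- weak convergence never implies a.e. convergence along a subsequence (take $a_n=\underline{a}+c\sin(ny)$, which stays in the order interval, converges weakly, but admits no a.e.-convergent subsequence) --- so the dominated-convergence upgrade to strong $L^p$ convergence collapses. The repair is the same as for item \ref{item1}: use the $H^{1+\varepsilon}$-boundedness built into $\domain{F}$ and the compact Sobolev embedding to convert weak convergence of the coefficients into uniform convergence, and only then pass to the limit in the weak formulation of \eqref{1}. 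As written, both arguments rely only on information (an $L^p$ bound on the range side, an $L^\infty$ bound on the coefficients) that genuinely does not suffice.
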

%%%%%%%%%%%%%%%%%%%%%%%%%% END THEOREM
%
%%%%%%%%%%%%%%%%%%%%%%%%%% PROOF
\begin{proof}
Items~\eqref{item1} and \eqref{item3} were proved in
\cite{DeCezaroScherzerZubelli09, Egger-Engl2005}. Item~\ref{item2}
was proved in \cite{DeCezaroScherzerZubelli09}.
\end{proof}
%%%%%%%%%%%%%%%%%%%%%%%%% END PROOF
%
%%%%%%%%%%%%%%%%%%%%%%%%% REMARK
\begin{remark} An immediate consequence of the compactness and weak closedness of the operator $F$
(see Theorem \ref{th:prop-F}) is the local ill-posedness
of the volatility calibration problem. See \cite{DeCezaroScherzerZubelli09, Egger-Engl2005}. 
\end{remark}
%%%%%%%%%%%%%%%%%%%%%%% END REMARK

The next lemma was already known in \cite{DeCezaroScherzerZubelli09},
however, we present it here for sake of completeness.
%
%%%%%%%%%%%%%%%%%%%%%%% LEMMA
\begin{lemma}\label{lemma:F-dif-inective}
The Fr\'echet derivative of the operator $F$,
$$
F'(a):H^{1+\varepsilon}(D) \longrightarrow W^{1,2}_2(D)\,,
$$
is injective and compact.
\end{lemma}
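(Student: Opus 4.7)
The plan is to prove injectivity and compactness of $F'(a)$ separately, in each case exploiting the PDE characterization
\begin{equation*}
-v_\tau + a(v_{yy} - v_y) = -h(u_{yy} - u_y), \qquad v := F'(a)h,
\end{equation*}
with homogeneous initial and boundary conditions, as recorded in Theorem~\ref{th:prop-F} (item~\ref{item3}). Both arguments reduce to classical properties of the linearized Dupire operator plus one structural fact about the unperturbed price $u$.

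For injectivity, the key step is that if $F'(a)h = 0$ in $W^{1,2}_2(D)$, then substituting $v \equiv 0$ into the linearized equation above forces $h \cdot (u_{yy}-u_y) = 0$ almost everywhere in $D$. Since $a$ is uniformly bounded away from zero on $\domain{F}$ and the original Dupire equation~\eqref{1} is uniformly parabolic, the strong maximum principle applied to the function $w := u_{yy} - u_y$ (which satisfies a linear parabolic equation with nonnegative Dirac-like initial data concentrated at $y=0$) yields $u_{yy}-u_y > 0$ almost everywhere on $D$; this is precisely the positivity of the risk-neutral transition density and is exactly what makes Dupire's formula~\eqref{dupform} well defined. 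Consequently $h = 0$ almost everywhere, which establishes injectivity.

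For compactness, I would factor
\begin{equation*}
F'(a) \;=\; S \circ M \circ J,
\end{equation*}
where $J: H^{1+\varepsilon}(D) \hookrightarrow L^2(D)$ is the Rellich--Kondrachov compact embedding (available because $D$ is bounded and $\varepsilon > 0$), $M$ is multiplication by $-(u_{yy}-u_y)$, which is a bounded operator from $L^2(D)$ into a suitable $L^q(D)$ using the known regularity $u-u(a_0) \in W^{1,2}_p(D)$ together with H\"older's inequality, and $S$ is the solution operator for the linear parabolic equation with right-hand side in $L^q$ and zero data, which is bounded into $W^{1,2}_2(D)$ by the standard $L^p$-theory for linear parabolic equations. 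Since $J$ is compact and the other two factors are bounded, the composition $F'(a)$ is compact.

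The main obstacle is the nonvanishing (indeed, positivity) of $u_{yy}-u_y$ almost everywhere in $D$: this has to be established as a genuine PDE statement rather than invoked heuristically from the financial interpretation, and one must check that the maximum-principle argument still applies when $I$ is allowed to be unbounded (for example by localizing to bounded subdomains and using the continuity of $u$ on $\domain{F}$). Once this is in place, the remainder is a routine factorization into continuous and compact pieces, requiring only a brief bookkeeping of Sobolev and integrability exponents so that the multiplication step indeed lands in a space on which parabolic regularity delivers a solution in $W^{1,2}_2(D)$.
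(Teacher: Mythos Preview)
The paper does not actually prove this lemma: it merely states the result ``for sake of completeness'' and refers to \cite{DeCezaroScherzerZubelli09} for the argument, so there is no in-paper proof to compare against. Your proposal follows what is in fact the standard route taken in that reference and in \cite{Egger-Engl2005}: injectivity via the strict positivity of the transition density $u_{yy}-u_y$ (which is exactly the content that makes Dupire's formula~\eqref{dupform} meaningful), and compactness via a factorization through a compact Sobolev embedding.

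One technical point in your compactness factorization deserves attention. As written, you embed $H^{1+\varepsilon}(D)$ compactly into $L^2(D)$ \emph{first} and only then multiply by $u_{yy}-u_y$. But $u_{yy}-u_y$ behaves like a heat kernel near $\tau=0$ (Dirac initial data) and is not in $L^\infty(D)$; hence the product of a generic $L^2$ function with this multiplier lands only in some $L^q$ with $q<2$, and parabolic $L^q$-regularity then yields $v\in W^{1,2}_q$, not $W^{1,2}_2$ as the lemma requires. The fix is immediate: since $D\subset\R^2$ is bounded and $1+\varepsilon>1=\dim D/2$, the embedding $H^{1+\varepsilon}(D)\hookrightarrow C(\overline D)$ is compact, so take $J$ to be that embedding instead. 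Then $h$ is bounded, the product $h\,(u_{yy}-u_y)$ lies in $L^2(D)$ (indeed $u_{yy}-u_y\in L^p(D)$ for $p<3$ by the heat-kernel asymptotics), and the solution operator $S$ delivers $W^{1,2}_2$ as needed. With this adjustment your argument is complete and matches the one in the cited literature.
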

%%%%%%%%%%%%%%%%%%%%%%% END LEMMA

A consequence of Lemma \ref{lemma:F-dif-inective} is the fact that $F$ cannot 
be constant along any affine subspace through $a$ and
parallel to $\mathcal{N}(F'(a))$. Hence, the \textit{tangential cone
condition}, presented in the theorem below, does not represent a 
severe assumption in the present context. See the comments in \cite[Chapter
11]{EngHanNeu96}. The proof of the following theorem can be found in
\cite{DCZ2010, DCZ2011}.

\begin{theo}
The parameter-to-solution map $F\,:\,\domain{F}\subset
H^{1+\varepsilon}(D) \longrightarrow W^{1,2}_2(D)$
satisfies the local \textit{tangential cone condition}, there exists $0 < \eta <1/2$ s.t.
\begin{align}\label{eq:tg-cone-condiction}
\norm{F(a) - F(\tilde{a}) -
F'(\tilde{a})(a-\tilde{a})}_{W^{1,2}_2(D)}\leq \eta \norm{F(a)
- F(\tilde{a})}_{W^{1,2}_2(D)}\,,%\qquad \eta < \frac{1}{2}\,,
\end{align}
for all $a , \tilde{a}$ in a ball $B_\rho(a^*)\subset
\mathcal{D}(F)$ with some $\rho >0$.

In particular, there exists $0< \widetilde{\eta} < 1/2$ s.t.
\begin{align}\label{eq:tg-cone-condiction1}
\norm{F(a) - F(\tilde{a}) -
F'(\tilde{a})(a-\tilde{a})}_{W^{1,2}_2(D)}\leq \widetilde{\eta}
\norm{a - \tilde{a}}_{\X} \norm{F(a) -
F(\tilde{a})}_{W^{1,2}_2(D)}\,,%\qquad \eta < \frac{1}{2}\,,
\end{align}
for all $a , \tilde{a}$ in a ball $B_\rho(a^*)\subset
\mathcal{D}(F)$ with some $\rho >0$.
\label{pr:cone-cond}
\end{theo}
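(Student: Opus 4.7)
\iproof The plan is to derive a parabolic PDE for the remainder $w := F(a) - F(\tilde a) - F'(\tilde a)(a-\tilde a)$ and then bound its $W^{1,2}_2$-norm by the $L^2$-norm of its source via standard parabolic regularity. Set $v := F(a) - F(\tilde a) = u(a) - u(\tilde a)$ and $z := F'(\tilde a)(a-\tilde a)$, so that $w = v - z$. Using Dupire's equation~\eqref{1} for $u(a)$ and $u(\tilde a)$, a direct subtraction and regrouping gives
\begin{equation*}
-v_\tau + a(v_{yy} - v_y) - r v_y = -(a-\tilde a)\bigl(u(\tilde a)_{yy} - u(\tilde a)_y\bigr),
\end{equation*}
while from~\eqref{eq:directional_deriv} we have
\begin{equation*}
-z_\tau + \tilde a(z_{yy} - z_y) - r z_y = -(a-\tilde a)\bigl(u(\tilde a)_{yy} - u(\tilde a)_y\bigr),
\end{equation*}
both with homogeneous initial/boundary conditions. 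Subtracting these two equations and regrouping the coefficient of the second-order terms yields the key identity
\begin{equation*}
-w_\tau + a(w_{yy} - w_y) - r w_y = -(a - \tilde a)\bigl(z_{yy} - z_y\bigr).
\end{equation*}
The crucial algebraic step is to substitute $z = v - w$ on the right-hand side and absorb the resulting $(a-\tilde a)(w_{yy} - w_y)$ into the left-hand side, producing
\begin{equation*}
-w_\tau + \tilde a(w_{yy} - w_y) - r w_y = -(a - \tilde a)\bigl(v_{yy} - v_y\bigr),
\end{equation*}
again with zero initial and boundary data. This reformulation is what makes the source linear in $v$ rather than in $u(\tilde a)$.

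Next I would apply the standard $L^p$ parabolic regularity estimates (available since $\tilde a$ is bounded above and below on $\mathcal{D}(F)$) to obtain
\begin{equation*}
\|w\|_{W^{1,2}_2(D)} \leq C\,\bigl\|(a-\tilde a)(v_{yy} - v_y)\bigr\|_{L^2(D)}.
\end{equation*}
Then, using the Sobolev embedding $H^{1+\varepsilon}(D) \hookrightarrow L^\infty(D)$ valid in dimension two for $\varepsilon > 0$, together with the obvious estimate $\|v_{yy} - v_y\|_{L^2} \leq \|v\|_{W^{1,2}_2}$, the source is controlled by
\begin{equation*}
\bigl\|(a-\tilde a)(v_{yy}-v_y)\bigr\|_{L^2(D)} \leq \|a-\tilde a\|_{L^\infty(D)}\|v\|_{W^{1,2}_2(D)} \leq \widetilde C\,\|a-\tilde a\|_{\X}\|F(a)-F(\tilde a)\|_{W^{1,2}_2(D)}.
\end{equation*}
Combining the two displays yields \eqref{eq:tg-cone-condiction1} with some constant $\widetilde\eta$. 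The weak form \eqref{eq:tg-cone-condiction} then follows by restricting $a,\tilde a$ to a ball $B_\rho(a^*)\subset \mathcal{D}(F)$ with $\rho$ small enough that $2\widetilde\eta\,\rho < 1/2$, so that $\widetilde\eta\|a-\tilde a\|_\X \leq \eta < 1/2$.

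The main obstacle I anticipate is the algebraic manipulation leading to the PDE for $w$ with coefficient $\tilde a$ and source proportional to $v_{yy}-v_y$: an unsophisticated subtraction produces a source proportional to $u(\tilde a)_{yy}-u(\tilde a)_y$ or $z_{yy}-z_y$, neither of which is directly bounded by $\|F(a)-F(\tilde a)\|$. Everything else is a routine application of maximal parabolic regularity and Sobolev embedding; the non-trivial input is verifying that the parabolic regularity constant depends only on the $L^\infty$ upper and lower bounds on $\tilde a$ (which hold uniformly on $\mathcal{D}(F)$), so that the constant $C$ in the estimate for $w$ is uniform over the ball $B_\rho(a^*)$.
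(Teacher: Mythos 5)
The paper gives no in-text proof of this theorem; it defers to \cite{DCZ2010, DCZ2011}, where the argument is essentially the one you outline: derive a parabolic equation for the Taylor remainder $w$ with leading coefficient $\tilde a$ and source $-(a-\tilde a)(v_{yy}-v_y)$, invoke parabolic regularity with homogeneous data, and control the source via $H^{1+\varepsilon}(D)\hookrightarrow L^\infty(D)$; your algebraic reduction is correct and the order of deduction (prove \eqref{eq:tg-cone-condiction1} first, then shrink $\rho$ to get \eqref{eq:tg-cone-condiction}) is the right one. The only imprecision is your claim that the $W^{1,2}_2$ a priori constant depends only on the $L^\infty$ bounds of $\tilde a$: for the non-divergence-form operator one needs a modulus of continuity (H\"older/VMO) of the leading coefficient, which here is supplied uniformly on $B_\rho(a^*)$ by the embedding $H^{1+\varepsilon}(D)\hookrightarrow C^{0,\varepsilon'}(\overline{D})$ for bounded Lipschitz $D\subset\mathbb{R}^2$.
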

%-----------------------------------------------------------------------------------------------
\end{document}